\newcommand{\mcC}{\mathcal{C}}
\newcommand{\mcD}{\mathcal{D}}
\newcommand{\calC}{\mathcal{C}}
\newcommand{\calD}{\mathcal{D}}
\newcommand{\calZ}{\mathcal{Z}}
\newcommand{\bbC}{\mathbb{C}}
\newcommand{\mrT}{\mathrm{T}}
\newcommand{\C}{\mathbb{C}}
\newcommand{\Z}{\mathbb{Z}}
\newcommand{\N}{\mathbb{N}}
\newcommand{\Hom}{\operatorname{Hom}}
\newcommand{\Id}{\operatorname{Id}}
\newcommand{\id}{\operatorname{id}}
\newcommand{\Fun}{\operatorname{Fun}}
\newcommand{\Vect}{\operatorname{Vect}}
\newcommand{\Bord}{\mathrm{Bord}}
\newcommand{\Hilb}{\mathrm{Hilb}}
\newcommand{\Hilbfd}{\mathrm{Hilb}^{\rm fd}}
\newcommand{\Vectfd}{\mathrm{Vect}^{\rm fd}}
\newcommand{\Cat}{\mathrm{Cat}}
\newcommand{\dCat}{\dag\mathrm{Cat}}
\newcommand{\ICat}{\mathrm{ICat}}
\newcommand{\PCat}{\mathrm{PCat}}
\newcommand{\indef}{\mathrm{indef}}
\newcommand{\Herm}{\operatorname{Herm}}
\newcommand{\exfix}{{\exists \rm fix}}
\newcommand{\pos}{{\mathrm{pos}}}
\newcommand{\fix}{{\rm fix}}
\newcommand\scrap{\bgroup\markoverwith
	{\textcolor{red}{\rule[.5ex]{2pt}{0.4pt}}}\ULon}
\newcounter{jfc}
\newcommand{\too}{\longrightarrow}
\newcommand{\pants}{
\parbox[center]{10pt}{\centering
\begin{tikzpicture}[scale=.6]
    \draw (0,.2) ellipse (.05 and .1);
    \draw (0,-.2) ellipse (.05 and .1);
    \draw (0,.3) to [in=180, out=0] (.4,.1);
    \draw (0,-.3) to [in=180, out=0] (.4,-.1);
    \draw (0,.1) to [in=0, out=0] (0,-.1);
    \draw (.4,0) ellipse (.05 and .1);
    \end{tikzpicture}}}
\newcommand{\copants}{
\parbox[center]{10pt}{\centering
\begin{tikzpicture}[scale=.6, xscale=-1]
    \draw (0,.2) ellipse (.05 and .1);
    \draw (0,-.2) ellipse (.05 and .1);
    \draw (0,.3) to [in=180, out=0] (.4,.1);
    \draw (0,-.3) to [in=180, out=0] (.4,-.1);
    \draw (0,.1) to [in=0, out=0] (0,-.1);
    \draw (.4,0) ellipse (.05 and .1);
    \end{tikzpicture}}}
\newcommand{\codisk}{
\parbox[center]{5pt}{\centering
\begin{tikzpicture}[scale=.6]
    \draw (0,0) ellipse (.05 and .1);
    \draw (0,.1) to [in=90, out=0] (.2,0);
    \draw (0,-.1) to [in=-90, out=0] (.2,0);
    \end{tikzpicture}}}
\newcommand{\disk}{
\parbox[center]{5pt}{\centering
\begin{tikzpicture}[scale=.6, xscale=-1]
    \draw (0,0) ellipse (.05 and .1);
    \draw (0,.1) to [in=90, out=0] (.2,0);
    \draw (0,-.1) to [in=-90, out=0] (.2,0);
    \end{tikzpicture}}}
\newtheorem{thm}{Theorem}[section]
\newtheorem{lem}[thm]{Lemma}
\newtheorem{cor}[thm]{Corollary}
\theoremstyle{definition}
\newtheorem{defn}[thm]{Definition}
\newtheorem{obs}[thm]{Observation}
\newtheorem{ex}[thm]{Example}
\newtheorem{rem}[thm]{Remark}
\title{Dagger categories via anti-involutions and positivity}
\author{Luuk Stehouwer and Jan Steinebrunner}
\begin{document}

\maketitle

\begin{abstract}
    
    Dagger categories are an essential tool for categorical descriptions of quantum physics, for example in categorical quantum mechanics and unitary topological field theory.
    Their definition however is in tension with the ``principle of equivalence'' that lies at the heart of category theory, thereby inhibiting generalizations to higher categories.
    In this note we propose an alternative, coherent description of dagger categories based on the well-studied notion of anti-involutions $d: \mathcal{C} \to \mathcal{C}^{op}$, which coherently square to the identity functor $\eta: d^2 \cong \id_{\mathcal{C}}$.
    A general anti-involution need not be the identity on objects, but we instead consider certain isomorphisms $dx \cong x$, which we call Hermitian fixed points as they generalize the notion of a Hermitian inner product on a vector space.
    We define a ``positivity notion" on $(\mathcal{C},d, \eta)$ in terms of such Hermitian fixed points.
    This terminology is motivated by the dagger category of Hilbert spaces, in which case the positivity notion consists of the positive definite pairings.
    Our main result is that the $2$-category of anti-involutive categories with a positivity notion is biequivalent to the $2$-category of dagger categories. 
\end{abstract}

\tableofcontents

\section{Dagger categories}

Hilbert spaces play an important role in the mathematical study of physical systems and in particular in the notion of unitary topological quantum field theory.
In the context of unitary TFTs it is especially important to understand Hilbert spaces from a categorical perspective.

When considering the category of finite dimensional Hilbert spaces and bounded operators $\Hilbfd$, one is faced with a fundamental problem:
the forgetful functor $\Hilbfd \to \Vectfd_\bbC$ is an equivalence of categories.
It is essentially surjective because every finite-dimensional vector space admits a Hilbert space structure and it is fully faithful because every linear map between finite dimensional Hilbert spaces is bounded.
We conclude that in this framework, category theory cannot tell apart Hilbert spaces and vector spaces.
To resolve this, we need to remember how to take the adjoint $A^*\colon H' \to H$ of an operator $A\colon H \to H'$. 
In other words, we should think of $\Hilbfd$ as a dagger category:

A dagger category is a category $\mcC$ equipped with a functor $\dag\colon \mcC^{op} \to \mcC$ satisfying $\dag \circ \dag^{op} = \Id_\mcC$ and $\dag(x) = x$ for all objects $x \in \mcC$.
A dagger functor $F\colon (\calC,\dag) \to (\calD, \ddagger)$ is a functor $F\colon \calC \to \calD$ such that $F(f^\dag) = F(f)^{\ddagger}$ holds for all morphisms $f\colon x \to y$ in $\calC$.

While dagger categories are key to categorical approaches to quantum physics, they also come with an inherent difficulty:
the condition $\dag(x) = x$ behaves poorly under equivalences of categories, and so attempts to transport dagger structures under those will fail in general. 
Dagger categories are hence sometimes humorously referred to as an ``evil'' concept \cite{MO-evil}, 
as they violate this principle of equivalence.
For example, there is no dagger structure on $\Vectfd$ which makes the equivalence $\Hilbfd \to \Vectfd$ into a dagger functor.
Indeed, let $(V,\langle . , . \rangle)$ be a Hilbert space and $(V,2\langle . , . \rangle)$ the same vector space with a scaled inner product.
Then the morphism $\id_V: (V,\langle . , . \rangle) \to (V,2\langle . , . \rangle)$ is not preserved under $\dag$.
However, its image in $\Vectfd$ is the identity on the vector space $V$ and so must be preserved under $\dag$.

However, there is still a well-behaved ``dagger category theory''
obtained by requiring all coherence isomorphisms to be \emph{unitary}.
   A morphism $u\colon x \to y$ in a dagger category $(\calC, \dag)$ is called unitary if $u^\dag\colon x \to y$ is an inverse to $u$,
   i.e.~if $u \circ u^\dag = \id_y$ and $u^\dag \circ u = \id_x$.
   There also is a notion of isometries:
   these are morphisms $i\colon x \to y$ satsifying only $i^\dag \circ i = \id_x$.

We can define a $2$-category $\dCat$ of dagger categories as follows.
Objects are dagger categories, morphisms are dagger functors, and $2$-morphisms are natural transformations $\alpha\colon F \to G$ such that each $\alpha_x \colon Fx \to Gx$ is an isometry.
Requiring that the natural transformations are isometries ensures that all invertible $2$-morphisms are unitary, 
and hence the $2$-category recovers the appropriate notion of equivalence of dagger categories:

\begin{lem}[{\cite[Lemma 5.1]{vicary}}]\label{lem:dagger-equivalence}
    We say that a dagger functor $F\colon \calC \to \calD$ is a \emph{dagger equivalence} if it satisfies the following equivalent conditions:
    \begin{itemize}
        \item 
        $F$ is an equivalence in the $2$-category $\dCat$.
        (i.e.~there is a dagger functor $G\colon \calD \to \calC$ such that $F \circ G$ and $G \circ F$ are unitarily naturally isomorphic to the respective identity functors.)
        \item $F$ is fully faithful and surjective up to unitaries.
        (i.e.~for each $d \in \calD$ there is a $c \in \calC$ such that $F(c)$ is unitarily isomorphic to $d$.)
    \end{itemize}
\end{lem}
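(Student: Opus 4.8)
The plan is to follow the classical proof that a functor is an equivalence if and only if it is fully faithful and essentially surjective, upgrading each step so that it respects the dagger structures. I write $\dag$ for the dagger on $\calC$ and $\ddagger$ for the dagger on $\calD$, and I will use repeatedly that $u$ is unitary exactly when $u^\dag = u^{-1}$ (so that $(u^{-1})^\dag = u$), together with the fact that a fully faithful functor is conservative, i.e.\ reflects isomorphisms.

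For the implication from the first condition to the second, suppose $G$ together with unitary natural isomorphisms $\eta\colon \Id_\calC \Rightarrow G\circ F$ and $\epsilon\colon F\circ G \Rightarrow \Id_\calD$ exhibits $F$ as an equivalence in $\dCat$. Forgetting daggers, these same data exhibit the underlying functor of $F$ as an ordinary equivalence of categories, so $F$ is fully faithful by the classical theorem. For surjectivity up to unitaries I would, given $d \in \calD$, take $c := G(d)$; then $\epsilon_d\colon F(G(d)) \to d$ is a component of a unitary natural isomorphism, hence a unitary isomorphism $F(c) \cong d$. This is the easy direction.

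For the converse, assume $F$ is fully faithful and surjective up to unitaries. Using the axiom of choice I pick, for each $d \in \calD$, an object $c_d \in \calC$ and a unitary $u_d\colon F(c_d) \to d$, and set $G(d) := c_d$. On a morphism $g\colon d \to d'$ I define $G(g)\colon c_d \to c_{d'}$ to be the unique morphism with $F(G(g)) = u_{d'}^{-1}\circ g\circ u_d$, which exists and is unique by full faithfulness; functoriality of $G$ then follows from that of $F$ together with uniqueness. Taking $\epsilon_d := u_d$ produces a unitary natural isomorphism $F\circ G \Rightarrow \Id_\calD$ straight from the definition of $G$ on morphisms, and defining $\eta_c$ to be the unique morphism with $F(\eta_c) = u_{F(c)}^{-1}$ gives a natural transformation $\Id_\calC \Rightarrow G\circ F$ which is componentwise invertible — hence a natural isomorphism — because $F$ is conservative.

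The step that genuinely uses the dagger hypotheses, and which I expect to be the crux, is verifying that $G$ is a dagger functor and that $\eta$ and $\epsilon$ are \emph{unitary} rather than merely invertible. For dagger-functoriality, note that $g\colon d \to d'$ has $g^\ddagger\colon d' \to d$, so the defining equation reads $F(G(g^\ddagger)) = u_d^{-1}\circ g^\ddagger \circ u_{d'}$; on the other hand $F(G(g)^\dag) = F(G(g))^\ddagger = (u_{d'}^{-1}\circ g\circ u_d)^\ddagger = u_d^\ddagger \circ g^\ddagger \circ (u_{d'}^{-1})^\ddagger$, and unitarity of $u_d, u_{d'}$ collapses this to $u_d^{-1}\circ g^\ddagger \circ u_{d'}$. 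The two right-hand sides agree, so faithfulness of $F$ gives $G(g^\ddagger) = G(g)^\dag$. The same mechanism shows $\eta$ is unitary: $F(\eta_c^\dag \circ \eta_c) = (u_{F(c)}^{-1})^\ddagger \circ u_{F(c)}^{-1} = u_{F(c)}\circ u_{F(c)}^{-1} = \id_{F(c)}$, and likewise for the opposite composite, so faithfulness forces $\eta_c^\dag\circ\eta_c = \id_c$ and $\eta_c\circ\eta_c^\dag = \id$. In each case the essential point is that conjugating by the \emph{unitary} $u_d$ commutes with taking daggers; had the chosen isomorphisms been merely invertible, $G$ would still be an ordinary quasi-inverse but would fail to be a dagger functor.
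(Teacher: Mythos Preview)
The paper does not give its own proof of this lemma; it simply cites \cite[Lemma~5.1]{vicary} and moves on. Your argument is correct and is exactly the standard proof one would expect: replay the classical ``fully faithful + essentially surjective $\Leftrightarrow$ equivalence'' argument, choosing the witnessing isomorphisms to be unitary, and then observe that conjugation by unitaries commutes with the dagger, which is what forces $G$ to be a dagger functor and $\eta$ to be unitary. There is nothing to compare against in the paper itself.
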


 One can make sense of a large collection of categorical notions by replacing `isomorphism' with `unitary isomorphism', such as limits and adjoints \cite{heunen2019limits}.
This also tells us how to transport dagger categories along equivalences:

\begin{thm}[{\cite[Theorem 3.1.3.]{wayofthedagger}}]
    Let $(\mathcal{C},\dag)$ be a dagger category and $F: \mathcal{C} \to \mathcal{D}$ an equivalence in $\Cat$ such that $\id_{\mathcal{C}} \to F^{-1} F$ and $F^{-1} F F^{-1} \to F^{-1}$ are unitary. Then there is a unique dagger structure on $\mathcal{D}$ making $F$ into a dagger equivalence.
\end{thm}

We provide another example of a dagger structure that cannot be transported along an equivalence: since the notion of unitary isomorphism is potentially stricter than isomorphism, the skeleton $\operatorname{sk}\mathcal{D} \hookrightarrow \mathcal{D}$ of a dagger category $\mathcal{D}$ can in general not be made into an equivalence of dagger categories.
   Namely, if $\mathcal{D}$ has two objects that are isomorphic, but not unitarily, then only one of them can be in $\operatorname{sk}\mathcal{D}$,
   and therefore $\operatorname{sk}\mathcal{D} \hookrightarrow \mathcal{D}$ cannot be surjective up to unitaries.
   Instead, the skeleton has to be replaced by a category with one object for each unitary isomorphism class of $\mathcal{D}$. 

The purpose of this note is to compare this $2$-category theory of dagger categories with the $2$-category theory of their coherent analogue: anti-involutive categories.
Additionally, we precisely describe which information is lost in the comparison process.
We define an anti-involutive category to be a category $\calC$ equipped with a functor $d\colon \calC^{op} \to \calC$ that squares to the identity functor up to chosen higher coherence.\footnote{This is sometimes called a category with duality.}
Abstractly, the $2$-category $\ICat$ of anti-involutive categories may be thought of as the homotopy fixed point category of the involution $\calC \mapsto \calC^{op}$ on the $2$-category $\Cat$.
Any dagger category gives rise to an anti-involutive category with trivial higher coherence, and this defines a $2$-functor $\mrT\colon \dCat \to \ICat$.
However, we will see that anti-involutive categories only suffice to capture the behaviour of ``indefinite'' dagger categories (Definition \ref{defn:indefinite}).
\begin{thm}
    There is a $2$-adjunction
    \[
        \mrT \colon \dCat \rightleftarrows \ICat \,:\! \Herm
    \]
    and it restricts to an equivalence between the full $2$-subcategory of indefinite complete dagger categories and the full $2$-subcategory of those anti-involutive categories where each object admits at least one fixed point structure.
\end{thm}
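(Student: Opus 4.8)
The plan is to realize the $2$-adjunction through an explicit model of the right adjoint $\Herm$ as a ``category of Hermitian objects'', and then to identify precisely the sub-$2$-categories on which the unit and counit are equivalences. Given an anti-involutive category $(\calC, d)$ with coherence isomorphism $\eta\colon \Id_\calC \Rightarrow d \circ d^\op$, I would define $\Herm(\calC, d)$ to have as objects the pairs $(x, h)$ where $h\colon x \to d(x)$ is an isomorphism satisfying the self-duality condition $d(h) \circ \eta_x = h$ (a \emph{fixed point structure} on $x$), and as morphisms $(x, h) \to (y, k)$ the plain morphisms $f\colon x \to y$ of $\calC$. The dagger is $f^\dag := h^{-1} \circ d(f) \circ k\colon y \to x$; the self-duality condition on $h, k$ together with the coherence axioms for $\eta$ are exactly what force $\dag$ to be strictly involutive and identity-on-objects. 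I would then promote $\Herm$ to a $2$-functor $\ICat \to \dCat$ acting on involutive functors and transformations in the evident way.

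For the adjunction $\mrT \dashv \Herm$, I would construct unit and counit explicitly. The counit $\varepsilon\colon \mrT\,\Herm(\calC, d) \to (\calC, d)$ is the functor $(x, h) \mapsto x$ forgetting the form, made involutive by using the forms $h$ themselves as the coherence comparing the (trivial, as it comes from a dagger category) involution on $\Herm(\calC, d)$ with $d$. The unit $\mathrm{u}\colon (\calD, \dag) \to \Herm(\mrT(\calD, \dag))$ sends $x \mapsto (x, \id_x)$, which is a legitimate Hermitian object since $\mrT$ equips $\calD$ with the strict, identity-on-objects involution for which $\id_x$ is self-dual; it is a dagger functor because the induced dagger of $f$ is $\id^{-1} \circ d(f) \circ \id = f^\dag$. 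I would verify the two triangle identities, or equivalently exhibit directly the natural equivalence of hom-categories
\[
    \IFun(\mrT(\calD, \dag), (\calC, d)) \;\simeq\; \dFun((\calD, \dag), \Herm(\calC, d)),
\]
which is essentially bookkeeping of how $\eta$ and the forms pass across the two sides.

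A $2$-adjunction restricts to a $2$-equivalence between the full sub-$2$-categories on which the counit and the unit, respectively, are equivalences, so it remains to match these two conditions with the stated ones. Both $\varepsilon$ and $\mathrm{u}$ are fully faithful by construction (morphisms are plain morphisms of the underlying categories on both sides). Hence $\varepsilon$ is an equivalence in $\ICat$ precisely when it is essentially surjective, i.e.\ when every object of $\calC$ carries \emph{at least one} fixed point structure --- exactly the condition on the anti-involutive side. For the unit I would invoke Lemma \ref{lem:dagger-equivalence}: $\mathrm{u}$ is a dagger equivalence precisely when it is surjective up to unitaries, i.e.\ when every Hermitian object $(x, \phi)$ of $\Herm(\mrT\calD)$ --- recorded by a self-adjoint isomorphism $\phi\colon x \to x$ --- is unitarily isomorphic to some $(y, \id_y)$ with $y \in \calD$, where unitary isomorphism $(y,\id)\cong(x,\phi)$ means an isomorphism $u\colon y \to x$ with $u^\dag \phi u = \id_y$. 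I would show this surjectivity is exactly indefinite completeness in the sense of Definition \ref{defn:indefinite}.

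To close the argument I would check that the two constructions preserve the relevant subcategories: $\mrT$ always lands among anti-involutive categories with fixed points, since $\id_x$ provides a fixed point structure on every object of $\mrT\calD$; and $\Herm(\calC, d)$ is always indefinite complete, which is the one genuine closure statement to verify. I expect the main obstacle to be twofold. First, threading the coherence $\eta$ consistently through the involutivity of the constructed dagger, the self-duality condition, and the triangle identities, where coherence errors are easy to introduce and must be tracked carefully rather than assumed trivial. Second --- and this is the real mathematical core --- matching ``surjective up to unitaries'' for the unit with the precise formulation of indefinite completeness: this is where positivity enters, amounting to the observation that a general self-adjoint invertible form need not be unitarily congruent to the identity on the same object (it can be indefinite), so that the condition genuinely requires $\calD$ to already contain representing objects for all such forms, and it is precisely the failure of positivity that makes these extra objects necessary.
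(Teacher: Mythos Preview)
Your proposal is correct and follows essentially the same route as the paper: the same unit $x \mapsto (x,\id_x)$ and counit $(x,h) \mapsto x$ (with involutive datum $h$), verification of the triangle identities, and the same characterisation of when each is an equivalence via Lemma~\ref{lem:dagger-equivalence} on the dagger side and essential surjectivity on the anti-involutive side. The one substantive verification you flag --- that $\Herm(\calC,d)$ is always indefinite --- is exactly the closure lemma the paper proves, and your reduction of ``surjective up to unitaries'' for the unit to the equation $\phi = f^\dag f$ matches the paper's argument precisely.
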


To fully capture dagger categories, we will introduce some extra structure on an anti-involutive category. 
More specifically, we define a Hermitian fixed point $h: x \to dx$ in Definition \ref{defn:hermfixpt} as a homotopy $\Z/2$-fixed point under $\Z/2$-action on the maximal subgroupoid induced by $d$.
The main concept we introduce to reconcile anti-involutive categories with dagger categories is a ``positivity notion" on an anti-involutive category (Definition \ref{defn:positivity}), which is a certain collection of Hermitian fixed points on its objects.
The intuition behind a positivity notion is two-fold:
\begin{enumerate}
    \item is specifies the isomorphisms $dx \cong x$ necessary to make $d$ the identity on objects;
    \item it specifies a collection of Hermitian pairings on the category that we prefer to call positive, compare Example \ref{ex:hilb}. 
\end{enumerate}
We then define $\PCat$ to be the $2$-category of anti-involutive categories equipped with a positivity notion.
This approach is in part motivated by LeFanu Lumsdaine's mathoverflow answer \cite{MO-evil}, which suggests to encode dagger categories by keeping track of a coherent involution and ``unitary fixed point data''.
Our main theorem states that these indeed form an equivalent notion to dagger categories.
\begin{thm}\label{thm:main}
    There the above adjunction lifts to a biequivalence of $2$-categories
    \[
        \dCat \simeq \PCat
    \]
    that commutes with the forgetful functors to $\Cat$.
\end{thm}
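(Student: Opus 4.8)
The plan is to show that the forgetful $2$-functor $\mrT\colon \dCat \to \ICat$ lifts to a $2$-functor $\widetilde{\mrT}\colon \dCat \to \PCat$ and that this lift is itself the desired biequivalence. On a dagger category $(\calC,\dag)$ the underlying anti-involutive category $\mrT(\calC,\dag)$ carries a canonical positivity notion $P_\dag$: a Hermitian pairing $h\colon x \to d(x)=x$ (so $h^\dag=h$) is declared positive precisely when it is a positive morphism in the dagger sense, i.e.~of the form $g^\dag\circ g$, so that the invertible positive pairings are exactly the positive-definite fixed point structures. Since dagger functors commute with $\dag$ on the nose they preserve $P_\dag$, and the isometric natural transformations supply the $2$-morphisms; verifying the axioms of Definition \ref{defn:positivity} for $P_\dag$ and the functoriality of $\widetilde{\mrT}$ is routine. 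Because $\mrT$ is the identity on underlying categories and on underlying functors, $\widetilde{\mrT}$ commutes strictly with the forgetful $2$-functors to $\Cat$, which is the final clause of the theorem.

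To prove that $\widetilde{\mrT}$ is a biequivalence I would use the standard criterion that it suffices to check essential surjectivity on objects together with the local statement that, for every pair of dagger categories, the induced functor on hom-categories $\dCat(\calC,\calD) \to \PCat(\widetilde{\mrT}\calC, \widetilde{\mrT}\calD)$ is an equivalence. The local claim unpacks as: (i) every positivity-preserving anti-involutive functor between images of dagger categories is isomorphic, through a $2$-cell of $\PCat$, to $\widetilde{\mrT}$ of a dagger functor, and (ii) the map on natural transformations is a bijection onto the positivity-compatible ones. Here the key point is that an anti-involutive functor carries a coherence isomorphism $F d \cong d F$; on strict dagger categories this is precisely the datum of $F$ preserving $\dag$ up to a chosen natural isomorphism, and preservation of $P_\dag$ forces that isomorphism to be unitary, hence (after the canonical adjustment) to agree with the strict identity $F(f^\dag) = F(f)^\dag$. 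Dually, the condition cutting out the $2$-morphisms of $\PCat$ specializes to the isometry condition defining $2$-morphisms of $\dCat$.

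For essential surjectivity on objects I would imitate the indefinite equivalence of the first theorem, replacing ``fixed point structure'' by ``positive Hermitian structure.'' Given $(\calC, d, P)$ I form the dagger category $\Herm_P(\calC)$ whose objects are pairs $(x,h)$ with $h$ a \emph{positive} Hermitian structure on $x$, whose morphisms are the underlying morphisms of $\calC$, and whose dagger is the strict involution $f^\dag := h_y^{-1}\circ d(f)\circ h_x$, which fixes objects and is strictly involutive by the Hermitian compatibility of $h$ with the coherence of $d$. The forgetful map $U\colon (x,h)\mapsto x$, equipped with the coherence data $\{h_x\}$, is then an anti-involutive functor carrying $P_\dag$ to $P$; provided the positivity notion guarantees that every object admits at least one positive Hermitian structure (an axiom I would extract from Definition \ref{defn:positivity}, and one that necessarily holds on any image of $\widetilde{\mrT}$, where $\id_x$ is positive), this $U$ is full, faithful and surjective up to unitaries, and so exhibits $\widetilde{\mrT}\,\Herm_P(\calC) \simeq (\calC,d,P)$ in $\PCat$. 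Since $\Herm_P$ refines $\Herm$, the unit and counit of the adjunction of the first theorem become the pseudonatural equivalences witnessing this, so that the biequivalence genuinely \emph{lifts} the stated adjunction.

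The step I expect to be the main obstacle is exactly the rigidification in these two places: showing that preservation of positivity upgrades the abstract coherence isomorphism of an anti-involutive functor to \emph{unitary} coherence, and showing that $\Herm_P$ inverts $\widetilde{\mrT}$ up to a \emph{dagger} equivalence in the sense of Lemma \ref{lem:dagger-equivalence}, rather than a mere equivalence of underlying categories. Both rely on the positivity axioms being strong enough to pin down the ``up to unitary'' ambiguity that the coherent involution alone cannot detect --- this is precisely the information that the first theorem identifies as lost when passing to $\ICat$, and the content of the present theorem is that recording $P$ restores exactly that information and no more. I would therefore concentrate the work on verifying that the axioms of Definition \ref{defn:positivity} are calibrated so that these two rigidification lemmas hold, after which the biequivalence and its compatibility with the forgetful functors follow formally from the criterion above.
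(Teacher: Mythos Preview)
Your overall architecture matches the paper's: lift $\mrT$ to $\PCat$ via a canonical positivity on $\mrT(\calD,\dag)$, build the inverse as the full sub-dagger-category $\Herm_P(\calC)\subset\Herm(\calC)$ on positive fixed points, and then argue that the unit $U$ and counit $K$ of the adjunction $\mrT\dashv\Herm$ restrict to equivalences in $\dCat$ and $\PCat$ respectively. So the strategy is right, and your last paragraph correctly identifies where the work lies.

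There is, however, one concrete gap in your definition of the canonical positivity $P_\dag$. You declare $h\colon x\to x$ positive when it is ``a positive morphism in the dagger sense, i.e.\ of the form $g^\dag\circ g$''. The paper's definition (Example~\ref{ex:positivity-structure-on-dagger-cat}) instead requires $g$ to be an \emph{automorphism}, and Remark~\ref{rem:positive} explicitly warns that these notions differ in general dagger categories: an invertible $h$ may be expressible as $g^\dag g$ for some endomorphism $g$ without admitting such an expression with $g$ invertible. This distinction is exactly what makes the argument work. With the automorphism definition, $h=a^\dag a$ says precisely that $h$ is the transfer of $\id_x$ along $a$, so Lemma~\ref{lem:positivity} gives a unitary $(x,\id_x)\cong(x,h)$ and hence $U_P\colon\calD\to\Herm_P(\mrT_P(\calD))$ is surjective up to unitaries. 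With your definition this step fails. The same problem recurs when checking that the counit $(K_\calC,\varphi_\calC)$ preserves positivity: the paper computes $\varphi_{(x,h)}\circ K_\calC(a^\dag a)=d(a)\circ h\circ a$, which is a transfer of $h$ and hence positive, but this uses that $a$ is invertible.

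On the methodological side, your proposed detour through the ``essentially surjective plus locally an equivalence'' criterion is unnecessary and, as you yourself anticipate, leads straight into a rigidification problem (choosing square roots $\alpha_c$ of each $\varphi_c$ naturally). The paper avoids this entirely: it simply checks that the restricted unit $U_P$ is a dagger equivalence and that the counit $K_\calC$ lifts to an equivalence in $\PCat$, both of which are short direct computations once $P_\dag$ is defined correctly. Since the biequivalence is witnessed by unit and counit, the local statement on hom-categories follows for free and need not be verified separately.
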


There has been plenty of previous work on several notions of involutions on categories, mostly about covariant (sometimes op-monoidal) involutions.
A partial list includes \cite{beggs, jacobs, egger, beninischenkelwoike, henriquespenneys, yau2020involutive, srinivasan2021dagger, srinivasan2023dagger, henriques2023}. 
Even though some of these references relate categories with weak involution to dagger categories, most references work with categories with more structure, such as (symmetric) monoidal or $\C$-linear categories.
We think of our formulation of the relationship between anti-involutive categories and dagger categories as the most elementary relationship, which could be enhanced with more structure if so desired.
In fact, the second author in \cite{luukthesis} obtains a (symmetric) monoidal version of this theorem.

One of the key uses of our main theorem is that it allows us to compute categories of dagger functors from ordinary functor categories together with information about the anti-involutions and the positivity notions.

\begin{thm}
    Let $(\calC, \dag)$ and $(\calD, \dag)$ be two dagger categories.
    Then $F \mapsto \dag_\calD \circ F \circ \dag_\calC$ defines an anti-involution on the category of all (not necessarily dagger) functors $F: \mathcal{C} \to \mathcal{D}$.
    The inclusion of the dagger functors into the fixed points 
    \[
        \Fun^\dag((\calC, \dag), (\calD, \dag)) \hookrightarrow 
        \left(\Fun(\calC, \calD)\right)^{\rm fix}
    \]
    is fully faithful and its essential image consists of those functors that preserve the positivity notions.
\end{thm}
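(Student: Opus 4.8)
The plan is to treat $D := \dag_\calD \circ (-) \circ \dag_\calC$ as a \emph{strict} anti-involution and reduce everything to unwinding the fixed-point construction. First I would record that on objects $D(F)(x) = F(x)$, on morphisms $D(F)(f) = F(f^\dag)^\dag$, and that $D$ acts contravariantly on natural transformations by $D(\alpha)_x = \alpha_x^\dag$. A direct check shows $D(F)$ is a functor, $D(\alpha)$ is natural (it is exactly the naturality of $\alpha$ read at $f^\dag$), $D$ is contravariant, and $D \circ D = \Id$ on the nose (since $\dag\circ\dag = \id$ on morphisms). In particular a functor $F$ is a dagger functor precisely when $D(F) = F$, i.e.\ when $F$ is a \emph{strict} fixed point; the inclusion in question sends such an $F$ to the fixed-point object $(F, \id_F)$, which is legitimate because $\id_F$ is a self-dual isomorphism $F \to D(F)$.

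Next I would unwind the fixed-point category. An object is a pair $(F,\phi)$ with $\phi\colon F \to D(F)$ a natural isomorphism satisfying the self-dual coherence $D(\phi) = \phi$; concretely each $\phi_x\colon F(x)\to F(x)$ is a self-adjoint automorphism ($\phi_x = \phi_x^\dag$), and naturality of $\phi$ rewrites, using $\phi = \phi^\dag$, as the twisted intertwining relation $F(f^\dag) = \phi_x^{-1}\,F(f)^\dag\,\phi_y$ for every $f\colon x\to y$. A morphism $(F,\phi)\to(G,\psi)$ is a natural transformation $\alpha\colon F\to G$ preserving the self-dual structures, $\phi = D(\alpha)\circ\psi\circ\alpha$, which at $x$ reads $\phi_x = \alpha_x^\dag\,\psi_x\,\alpha_x$. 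Specialising to objects in the image, where $\phi = \psi = \id$, this becomes $\alpha_x^\dag\alpha_x = \id$, i.e.\ $\alpha$ is an isometry. Since these are exactly the isometric $2$-morphisms between dagger functors in $\dCat$, full faithfulness is now immediate: on a pair $(F,G)$ of dagger functors both hom-sets are literally the isometric natural transformations $F\to G$, and the inclusion is the identity on them. (Compatibility with the forgetful functors to $\Cat$ is clear, since both send these data to the underlying $F$ and $\alpha$.)

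The substantive part is identifying the essential image with the positivity-preserving functors. One direction is easy: if $G$ is a dagger functor it sends $b^\dag b$ to $G(b)^\dag G(b)$, hence preserves positive Hermitian forms, so $(G,\id)$ preserves the positivity notion; and as every isomorphism in the fixed-point category is isometric, preservation of positivity is invariant along such isomorphisms, so every object of the essential image preserves positivity. For the converse I would show that preserving positivity forces $\phi$ to be positive — testing on the positive form $\id_x$ already yields that $\phi_x$ is positive — and then trivialise $\phi$. Writing $\phi_x = \beta_x^\dag\beta_x$ for an isomorphism $\beta_x$ (the defining factorisation of a positive form supplied by the positivity notion), I set $G(x) := \mathrm{cod}(\beta_x)$ and $G(f) := \beta_y\,F(f)\,\beta_x^{-1}$. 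A short computation using $F(f^\dag) = \phi_x^{-1}F(f)^\dag\phi_y$ then gives $G(f^\dag) = G(f)^\dag$, so $G$ is a genuine dagger functor, while $\beta\colon F\to G$ is by construction a natural isomorphism with $\beta^\dag\beta = \phi$, i.e.\ an isomorphism $(F,\phi)\cong(G,\id)$ in the fixed-point category. This exhibits $(F,\phi)$ in the essential image.

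The main obstacle is exactly this trivialisation, and it is where the positivity notion (rather than the bare anti-involution) does the work: a general self-adjoint automorphism $\phi_x$ need not admit a factorisation $\phi_x = \beta_x^\dag\beta_x$ with $\beta_x$ invertible (an \emph{indefinite} $\phi_x$ is precisely the obstruction isolated in the first theorem), so without positivity $(F,\phi)$ need not be isomorphic to any strict fixed point. I would therefore be careful to (i) use the precise positivity notion induced on $\Fun(\calC,\calD)$ from that of $\calD$, so that ``$F$ preserves the positivity notion'' translates exactly into positivity of the comparison automorphisms $\phi_x$, and (ii) note that naturality of $\beta$ is forced once $G$ is defined by transport, so the only genuine input is the objectwise factorisation, which is the content of positivity. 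This matches, and can alternatively be deduced from, the biequivalence $\dCat\simeq\PCat$ applied to hom-categories, with $D$ and its induced positivity notion realising the internal hom.
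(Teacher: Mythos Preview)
Your argument is correct, but it takes a different route from the paper. The paper's proof is a two-line deduction: Lemma~\ref{functorsfix} already identifies $\left(\Fun(\calC,\calD)\right)^{\rm fix}$ with $\Hom_{\ICat}(\mrT\calC,\mrT\calD)$, and then the biequivalence $\dCat\simeq\PCat$ of the main theorem says precisely that $\Fun^\dag(\calC,\calD)=\Hom_{\dCat}(\calC,\calD)$ is equivalent to the full subcategory $\Hom_{\PCat}(\mrT_p\calC,\mrT_p\calD)\subset\Hom_{\ICat}(\mrT\calC,\mrT\calD)$ on positivity-preserving involutive functors. No further computation is done.

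You instead unfold everything by hand: you describe the fixed points explicitly as pairs $(F,\phi)$ with $\phi_x$ self-adjoint, verify full faithfulness directly, and for the essential image you extract positivity of $\phi_x$ by testing on $\id_x$, then choose a factorisation $\phi_x=\beta_x^\dag\beta_x$ and transport $F$ along $\beta$ to produce a genuine dagger functor $G$ with $(F,\phi)\cong(G,\id)$. This is exactly the content of the proof of the main theorem specialised to this situation (your transport step is the construction of $\Herm_P\circ\mrT_P$ and the verification that $U_P$ is surjective up to unitaries), so you are essentially reproving the relevant piece of the biequivalence rather than invoking it. The advantage of your approach is that it is self-contained and makes the mechanism completely transparent; the paper's approach has the virtue of exhibiting the corollary as a formal consequence of the main result, which is conceptually cleaner. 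You acknowledge this connection in your final paragraph, which is good.
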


In Section \ref{sec:outlook}, we similarly describe symmetric monoidal dagger functors as certain fixed points on the category of symmetric monoidal functors.
(This uses the aforementioned variant of our main theorem, proved in \cite{luukthesis}.)
Inspired by the approach of \cite{freedhopkins} we use this to study unitary TQFTs, and we will give a concrete example by classifying unitary 2-dimensional TQFTs.

\subsubsection*{Acknowledgements}
The first author would like to thank Theo Johnson-Freyd, Lukas M\"uller, David Reutter, Stephan Stolz, and Peter Teichner for fruitful discussions on dagger categories, especially in relationship to unitary TFTs.
The first author expresses his gratitude to the Max Planck Institute for Mathematics in Bonn for its stimulating research environment leading to this work.

The second author would like to thank Andr\'e Henriques for introducing him to dagger categories, 
and Dave Penneys and David Reutter for many enlightening conversations on this topic.
The second author is supported by the ERC grant no.~772960, and would like to thank the Copenhagen Centre for Geometry and Topology for their hospitality.

We would like to thank Lukas Brantner for pointing out a mistake in the definition of the positivity structure in example \ref{ex:positivity-structure-on-dagger-cat} in the published version of this paper.
This example, remark \ref{rem:positive}, and the relevant parts of the proof of \ref{thm:biequivalence} have been fixed in the present version. 

\section{Anti-involutive categories}

As a first approximation to a more categorically well-behaved version of dagger categories, we can weaken the condition that $\dagger: \mathcal{C} \to \mathcal{C}^{op}$ squares to the identity functor on the nose and we also no longer require it to be the identity on objects.
Instead, we give a natural isomorphism $\dagger \circ \dagger^{op} \cong \Id_\mathcal{C}$ satisfying some compatibility conditions.
Here given a functor $F: \mathcal{C} \to \mathcal{D}$, we denoted the canonical induced functor $\mathcal{C}^{op} \to \mathcal{D}^{op}$ by $F^{op}$, but we will often abuse notation and write it as $F$.

A category with anti-involution is exactly a fixed point for the $\Z/2$-action on the $(2,1)$-category of categories given by $\mathcal{C} \mapsto \mathcal{C}^{op}$, see \cite[section 2.2]{hessethesis} or \cite[Appendix A.2]{mullerstehouwer}.
This results in the following concrete definition:

\begin{defn}
For $\mathcal{C}$ a category, an \emph{anti-involution} is a functor $d: \mathcal{C} \to \mathcal{C}^{op}$ and a natural isomorphism $\eta: \id_{\mathcal{C}} \Rightarrow d^{op} \circ d $ 
such that $\eta_{d(c)}: d(c) \to d d d(c)$ and $d(\eta_c): ddd(c) \to d(c)$ are inverses.
We call the triple $(\mathcal{C}, d, \eta)$ an \emph{anti-involutive category.}
\end{defn}

\begin{rem}
In fact we could require that $d^{op} \circ d = \id_\calC$ and $\eta= \id_{\id_\calC}$.
This would lead to a biequivalent $2$-category, also see \cite[Section 3]{Cockett} where such stricter involutions are studied in the context of linear logic.
However, we will not pursue this strictification here, because many examples are not strict, and it is not in the spirit of this paper.
\end{rem}

\begin{ex}\label{ex:dagger-is-trivial-involutive}
If $(\mathcal{C}, \dagger)$ is a dagger category, define an anti-involution as $d := \dagger$.
Since $dc = c$ for every object $c$ of $\mathcal{C}$, we can set $\eta_c = \id_c$, which gives an anti-involutive category since $\id_c^\dagger = \id_c$.
\end{ex}

\begin{defn}
    An involutive functor $F:(\calC_1,d_1,\eta_1) \to (\calC_2,d_2,\eta_2)$ consists of a functor $F:\calC_1 \to \calC_2$ and a natural isomorphism $\varphi:F^{op} \circ d_1 \cong d_2 \circ F $ such that the following square commutes for all $x \in \calC_1$:
    \[
        \begin{tikzcd}[column sep = 60]
            F(x) \ar[d, "(\eta_2)_{F(x)}"] \ar[r, "F((\eta_1)_x)"] & (F \circ d_1^{op} \circ d_1)(x) \ar[d, "\varphi_{d_1(x)}"]\\
            (d_{2}^{op}\circ d_2 \circ F)(x) \ar[r, "d_2^{op}(\varphi_x)"] & (d_2^{op} \circ F^{op} \circ d_1)(x) 
        \end{tikzcd}
    \]
    The composition of involutive functors $(F: \calC_1 \to \calC_2 ,\varphi) \circ (G: \calC_2 \to \calC_3,\psi)$ is defined to come equipped with the natural transformation
    \[
    F \circ G \circ d_1 (x) \xrightarrow{F(\psi_x)} F \circ d_2 \circ G (x) \xrightarrow{\varphi_{G(x)}} d_3 \circ F \circ G(x)
    \]
    which is easily shown to satisfy the required condition.
    An involutive natural transformation $\alpha\colon (F\colon  \calC_1 \to \calC_2,\varphi) \Rightarrow (G\colon  \calC_1 \to \calC_2,\psi)$ is a natural transformation $\alpha\colon F \Rightarrow G$ such that the following square commutes for all $x \in \calC_1$:
    \[
        \begin{tikzcd}[column sep = 40]
            (F \circ d_1)(x) \ar[d, "\varphi_x"] \ar[r, "\alpha_{d_1(x)}"] & (G \circ d_1)(x) \ar[d, "\psi_x"]\\
            (d_2 \circ F)(x) & (d_2 \circ G)(x) \ar[l, "d_2(\alpha_x)"] 
        \end{tikzcd}
    \]
    The composition of involutive natural transformations is involutive.
    Let $\ICat$ denote the $2$-category of anti-involutive categories, involutive functors, and involutive natural transformations.
\end{defn}

Note that similar to isometric natural transformations for dagger categories, an involutive natural transformation $\alpha_c$ admits a left inverse, but not necessarily a right inverse.

\begin{rem}
The observation in Example \ref{ex:dagger-is-trivial-involutive} that every dagger category is canonically an anti-involutive category extends to give a $2$-functor $\mrT\colon \dCat \to \ICat$.
More precisely, if $F\colon (\mathcal{C}, \dagger) \to (\mathcal{D}, \dagger)$ is a dagger functor, we can take $\varphi_c = \id_{F(c)}$.
The condition that this defines a natural transformation $F \circ \dagger \Rightarrow \dagger \circ F$ is equivalent to $F$ being a dagger functor.
The remaining condition between $\eta$ and $\varphi$ is satisfied, since all morphisms involved are the identity.

Finally, let $\alpha\colon F \Rightarrow F'$ be a natural transformation.
Then $\alpha$ is an involutive natural transformation between the induced involutive functors if and only if $\alpha_c$ is an isometry for all objects $c$, which is how we defined $2$-morphisms in $\dCat$.
We recall that that $\alpha_c$ need not be invertible, but it is invertible if and only if it is unitary.
Clearly these constructions preserve composition of functors and both horizontal and vertical composition of natural transformations.
\end{rem}

\begin{lem}\label{lem:involutive-equivalence}
    An involutive functor $(F, \varphi)\colon (\mcC, d, \rho) \to (\mcD, d, \eta)$ is an equivalence in $\ICat$ (i.e.~it has an involutive inverse up to \emph{involutive} natural transformation)
    if and only if the underlying functor $F\colon\mcC \to \mcD$ is an equivalence of categories.
\end{lem}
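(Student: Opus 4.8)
The forward implication is immediate. The assignment sending $(\mcC,d,\eta)$ to $\mcC$, an involutive functor $(F,\varphi)$ to $F$, and an involutive natural transformation to its underlying natural transformation is a forgetful $2$-functor $\ICat\to\Cat$. Since $2$-functors preserve equivalences, if $(F,\varphi)$ is an equivalence in $\ICat$ then $F$ is an equivalence of categories.

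For the converse the plan is to promote a quasi-inverse of $F$ to an involutive functor by the calculus of mates. First I would upgrade the equivalence to an \emph{adjoint} equivalence: choose $G\colon\mcD\to\mcC$ together with natural isomorphisms $u\colon\id_\mcC\Rightarrow G\circ F$ and $c\colon F\circ G\Rightarrow\id_\mcD$ satisfying the two triangle identities, which is always possible. Noting that $F^\op$ is then an adjoint equivalence with inverse $G^\op$, I would define the involutive structure $\psi\colon G^\op\circ d_\mcD\cong d_\mcC\circ G$ on $G$ to be the \emph{mate} of $\varphi$ across the adjunctions $F\dashv G$ and $F^\op\dashv G^\op$; concretely $\psi$ is assembled from $\varphi^{-1}$, $u$ and $c$, and is a natural isomorphism precisely because $\varphi$, $u$ and $c$ are.

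The main obstacle is to verify the coherence square from the definition of an involutive functor for the pair $(G,\psi)$, relating $\eta$, $\rho$ and $\psi$. I expect this to be the most delicate bookkeeping, but it is a formal diagram chase: one combines the coherence square for $(F,\varphi)$ with the naturality of $\varphi$, $\rho$ and $\eta$ and the triangle identities for $u$ and $c$. This is exactly the assertion that the mate of a coherent $2$-cell is again coherent, i.e.\ that the pseudo-algebra structure of the anti-involution transports across the adjoint equivalence.

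Finally I would check that $u$ and $c$ are \emph{involutive} natural transformations between the identity involutive functor $(\id,\id)$ and the composite involutive functors $(G,\psi)\circ(F,\varphi)$ and $(F,\varphi)\circ(G,\psi)$. By the same mate calculus this follows from the triangle identities; it is the instance of Kelly's doctrinal adjunction for the $\Z/2$-action $\mcC\mapsto\mcC^\op$ on $\Cat$. Since $u$ and $c$ are invertible and the inverse of an invertible involutive natural transformation is again involutive (the defining square is symmetric under inversion), they are invertible $2$-cells in $\ICat$ and hence exhibit $(G,\psi)$ as an inverse to $(F,\varphi)$. Conceptually this is the general principle that the forgetful $2$-functor from the homotopy fixed points of a groupoidal $2$-action reflects equivalences.
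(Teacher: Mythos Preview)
Your proposal is correct and takes essentially the same route as the paper: both upgrade to an adjoint equivalence, define $\psi$ as the mate of $\varphi$ (the paper writes out the explicit composite $\beta_{dGy}\circ G\varphi_{Gy}^{-1}\circ Gd\alpha_y$, which is exactly this mate), verify the involutive coherence square by a diagram chase using the triangle identities and naturality, and then check that the unit and counit are involutive. The only difference is that you name the organising principles (mate calculus, doctrinal adjunction, homotopy fixed points reflecting equivalences) where the paper performs each step by hand without invoking the abstraction.
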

\begin{proof}
    The only if direction holds because if $(G, \psi)$ is an involutive inverse functor, then $G$ is an inverse of $F$ up to natural isomorphism.
    
    For the if direction, pick some $G\colon\mcD \to \mcC$ and natural transformations $\alpha\colon F \circ G \cong \Id_\mcD$ and $\beta\colon G \circ F \cong \Id_\mcC$.
    Recall that without loss of generality, we can assume this is an adjoint equivalence, i.e.~$\alpha$ and $\beta$ satisfy the snake identities.
    It suffices to provide the data $\psi$ that makes $G$ into an involutive functor and show that $\alpha$ and $\beta$ become involutive natural transformations.
    Define $\psi$ at an object $y$ of $\mcD$ as
    \[
    Gdy \xrightarrow{Gd \alpha_y} GdFGy \xrightarrow{G \varphi_{Gy}^{-1}} GFdGy \xrightarrow{\beta_{dGy}} dGy.
    \]
    By definition of being an involutive functor, we have to show the diagram
    \[
    \begin{tikzcd}[column sep=1.4cm]
        Gy \ar[r,"{G\rho_y}"] \ar[d,"{\eta_{Gy}}"] & G d^2 y \ar[r,"{G d \alpha_{dy}}"] & GdFG dy & GFdGdy \ar[l,"{G \varphi_{G dy}}"] \ar[d,"{\beta_{dGdy}}"]
        \\
        d^2 Gy \ar[r,"{d \beta_{d Gy}}"] & dGF dGy & dG dFGy \ar[l,"{dG\varphi_{Gy}}"] \ar[r,"{dGd\alpha_y}"] & dGdy
    \end{tikzcd}
    \]
    commutes.
    For this, first note that 
    \[
    \begin{tikzcd}[column sep=1.7cm]
        \ & Gy \ar[r,"{\eta_{Gy}}"] & d^2 Gy
        \\
        Gy \ar[ru,equals] \ar[r,"{G \alpha_y}"] \ar[d,"{G \rho_y}"] & GFG y \ar[u,"{\beta_{Gy}}"] \ar[r,"{GF \eta_{Gy}}"] \ar[d,"{G \rho_{FG y}}"] & GF d^2 G y  \ar[u,"{\beta_{d^2 G y}}"] \ar[d,"{G \varphi_{dGy}}"]
        \\
        G d^2 y & G d^2 FG y \ar[r,"{Gd\varphi_{Gy}}"] \ar[l,"{G \alpha_{d^2 y}}"] & GdFdGy
    \end{tikzcd}
    \]
    commutes.
    Indeed, the left upper triangle commutes by the snake identity, the right upper square commutes by naturality of $\beta$, the left lower square commutes by naturality of $\alpha$ and the lower right square commutes because $F$ is an involutive functor.
    Replacing the morphisms $G\rho_y$ and $\eta_{Gy}$ in the first diagram by this second diagram leads us to conclude that it suffices to show that the following diagram commutes. 
    We omitted the choice of input object $y$ in $\mcD$ from the notation for reasons of space.
    \[
    \begin{tikzcd}[row sep=1.35cm]
        \ & G d^2 FG \ar[d,"{Gd \alpha_{dFG}}"] \ar[ld,"{G \alpha_{d^2}}", swap] \ar[r,"{G d \varphi_G}"] & 
        GdFdG \ar[d,"{GdF\beta_{dG}}", bend left] \ar[d,"{Gd\alpha_{FdG}}", bend right, swap] & 
        GFd^2 G \ar[d,"{GFd\beta_{dG}}", swap] \ar[dr,"{\beta_{d^2 G}}"] \ar[l,"{G \varphi_{dG}}", swap] & \ 
        \\
        G d^2 \ar[dr,"{G d \alpha_d}", swap] & GdFG d FG \ar[r,"{GdFG \varphi_G}", swap] \ar[d,"{GdFGd \alpha}"] & G d FGF d G & GFdGFdG \ar[l,"{G \varphi_{GFdG}}"]  \ar[d,"{\beta_{dGFdG}}"]& d^2 G \ar[dl,"{d \beta_{dG}}"]
        \\
        \ & G dFG d & GFdGdFG \ar[ul,"{G\varphi_{GdFG}}", swap] \ar[ur,"{GFdG\varphi_G}"] \ar[dl,"{GFdGd\alpha}"] \ar[dr,"{\beta_{dGdFG}}", swap] & dGFdG & \ 
        \\
        \ & GFdGd \ar[u,"{G \varphi_{Gd}}"] \ar[r,"{\beta_{dGd}}"] & dGd & dGdFG\ar[u,"{dG \varphi_G}"] \ar[l,"{dGd\alpha}"] & \
    \end{tikzcd}
    \]
    Every quadrilateral in the diagram commutes by the interchange law and the upper two bent arrows are equal by the snake identity.
    We are led to conclude that $(G, \psi)$ is an involutive functor.
    
    It remains to show that $\alpha$ and $\beta$ are involutive natural transformations.
    Writing out the definition of the involutive structure on $F \circ G$ this entails that for $\alpha$ we have to show that the diagram 
    \[
    \begin{tikzcd}[column sep=1.2cm]
        FGdFG y \ar[ddr,"{\alpha_{dFGy}}"] & FG dy \ar[l,"{FG d\alpha_y}", swap] \ar[d,"{\alpha_{dy}}"]
        \\
        FGFdGy \ar[u,"{FG \varphi_{Gy}}"] \ar[d,"{\alpha_{FdGy}}", bend left] \ar[d,"{F \beta_{d G y}}", bend right, swap] & dy \ar[d,"{d \alpha_y}"]
        \\
        FdGy \ar[r,"{\varphi_{Gy}}"] & dFG y
    \end{tikzcd}
    \]
    commutes.
    The two bent arrows are equal by the snake identity and the other two parts commute by the interchange law.
    The proof that $\beta$ is involutive is analoguous.
\end{proof}

The following example shows that the underlying anti-involution of a dagger category does not preserve enough information.

\begin{ex}
\label{ex:Hilbpmequiv}
    Let $\operatorname{Herm}_{\C}$ denote the category where objects are finite dimensional complex vector spaces with a non-degenerate sesquilinear form such that 
    \[
    \langle v,w \rangle = \overline{\langle w,v \rangle}
    \]
    and morphisms are all linear maps.
    In other words, these are Hermitian vector spaces that are not necessarily positive definite inner product spaces.
    It becomes a dagger category when the dagger is defined by taking the adjoint with respect to the pairing.
    
    The category of finite dimensional Hilbert spaces is a full dagger subcategory
    $\Hilb \subset \operatorname{Herm}_{\C}$
    characterised by the condition that the sesquilinar form be positive definite.
    This inclusion is not a dagger equivalence, as it is not surjective up to unitaries.
    Indeed, objects in $\operatorname{Herm}_{\C}$ are classified, up to unitary isomorphism, by their signature $(p,q)$ and the full subcategory only contains those of signature $(p,0)$.
    
However, the inclusion $F\colon \Hilb \to \operatorname{Herm}_{\C}$ is an equivalence of anti-involutive categories.
It is fully faithful and essentially surjective because every finite-dimensional vector space admits some Hilbert space structure.
By Lemma \ref{lem:involutive-equivalence}, this is an equivalence of anti-involutive categories.

Concretely, we could construct a (highly noncanonical) inverse of this equivalence as follows.
Pick for every finite-dimensional Hermitian vector space $(V, \langle .,. \rangle )$ a basis $\alpha_V\colon V \cong \C^n$ once and for all.
Define the functor $G\colon \operatorname{Herm}_{\C} \to \Hilb$ on objects by $G(V, \langle .,. \rangle) = (\C^n, \langle .,. \rangle_{st})$ where $\langle .,. \rangle_{st}$ is the standard Hilbert space structure.
On morphisms we set $G(f\colon V_1 \to V_2) := \alpha_{V_2}^{-1} \circ f \circ \alpha_{V_1}$.
There is an associated canonical natural isomorphism $\alpha\colon \id_{\operatorname{Herm}_{\C}} \implies F \circ G$ given by $\alpha(V, \langle .,. \rangle) = \alpha_V\colon (V, \langle .,. \rangle) \to (\C^n, \langle .,. \rangle_{st})$.
Now, $G$ is not a dagger functor since $\alpha_V$ is in general not unitary.
But even though the anti-involutions $d$ on both categories are the identity on objects, we can use the recipe in the above lemma to equip $G$ with a non-trivial structure of an involutive functor:
\[
\varphi_V\colon G(dV) = G(V) = \C^n \xrightarrow{\alpha_V^\dagger} V \xrightarrow{\alpha_V} \C^n = G(V) = d G(V)
\]
Then the condition that $\varphi$ has to satisfy for $G$ to be an involutive functor boils down to $\varphi^\dagger_V = \varphi_V$, which is easy to check.
Hence $(G,\varphi)$ is an involutive inverse of the involutive functor $F$.
\end{ex}

\section{Hermitian fixed points and Hermitian completion}

In the last section, we proposed the notion of an anti-involutive category as a better-behaved analogue of the notion of a dagger category so that every dagger category has an underlying anti-involutive category.
However, in example \ref{ex:Hilbpmequiv} we found that there are important examples of dagger categories that are equivalent as anti-involutive categories but not as dagger categories.
Heuristically, the example gives us the idea that the dagger category of finite-dimensional Hilbert spaces is not equivalent to the dagger category of finite-dimensional Hermitian vector spaces because in the former fewer Hermitian structures are allowed.
Therefore we study an abstraction of the notion of a Hermitian structure, which we learned from \cite[Definition B.14]{freedhopkins}.

\begin{defn}
\label{defn:hermfixpt}
A \emph{Hermitian fixed point} in a category $\mathcal{C}$ with anti-involution $(d,\eta)$ on an object $c$ is an isomorphism $h\colon c \to dc$ such that
\[
\begin{tikzcd}
c  \arrow[rr, bend right, "h"] \arrow[r, "\eta_c"] &  d^2 c \arrow[r, "dh", swap] & dc
\end{tikzcd}
\]
commutes.
The \emph{adjoint} $f^\dagger\colon c_2 \to c_1$ of a morphism $f\colon c_1 \to c_2$ with respect to Hermitian fixed points $h_1\colon c_1 \to dc_1$ and $h_2\colon c_2 \to dc_2$ is the composition
\[
c_2 \xrightarrow{h_2} dc_2 \xrightarrow{d f} dc_1 \xrightarrow{h_1^{-1}} c_1.
\]
\end{defn}

\begin{ex}
\label{ex:Hermvect}
Take $\mathcal{C} = \Vectfd_\C$ to be the category of finite-dimensional vector spaces.
Recall that the complex conjugate $\overline{V}$ of a vector space $V$ is defined to be the same abelian group but with complex conjugate scalar multiplication.
This extends to a functor $\overline{(.)}: \Vectfd_\C \to \Vectfd_\C$.
Set $d = \overline{(.)}^*$ so that there is an obvious $\eta$ given by the evaluation map. 
It is straightforward to check that $\eta$ satisfies $\eta_{\overline{V}^*} = \overline{\eta_V}^*$.
A Hermitian fixed point consists of a vector space $V$ and an isomorphism $V \to \overline{V}^*$ satisfying a condition.
Such an isomorphism is equivalent to a nondegenerate sesquilinear pairing and the condition is equivalent to the Hermiticity axiom
\[
\langle v,w \rangle = \overline{ \langle w,v \rangle}.
\]
The adjoint is given by the usual adjoint of a linear map.
\end{ex}

Hermitian fixed points naturally form a category $\mathcal{C}^{\fix}$ in which morphisms $f\colon (c_1, h_1) \to (c_2, h_2)$ are morphisms $c_1 \to c_2$ satisfying the compatibility relation 
\[
\begin{tikzcd}
    c_1 \ar[r,"f"] \ar[d,"h_1"] & c_2 \ar[d,"h_1"]
    \\
    dc_1 & dc_2 \ar[l,"df"]
\end{tikzcd}
\]
Let $f\colon (c_1, h_1) \to (c_2,h_2)$ be a morphism in $\mathcal{C}^{\fix}$.
Note that the condition $f$ has to satisfy exactly says that $f^\dagger$ is a left inverse of $f$. 
Therefore $\calC^{\fix}$ is exactly the wide subcategory of isometries of the dagger category $\Herm(\calC)$ that we shall define now.
The construction is closely related to the `unitary core of a $\dagger$-isomix category', which appears in the context of $\dagger$-linear logic \cite[Definition 5.12]{srinivasan2021dagger}.
One could think of $\Herm(\calC)$ as the `co-free dagger category on an anti-involutive category'. This idea is made precise by the $2$-adjunction that we will establish in theorem \ref{thm:2-adjunction}.

\begin{defn}\label{defn:Herm}
The \emph{Hermitian completion} $\operatorname{Herm} \mathcal{C}$ of the anti-involutive category $(\mathcal{C}, d, \eta)$ is the category in which objects are Hermitian fixed points $(c,h)$ and morphisms $(c_1, h_1) \to (c_2,h_2)$ are simply given by morphisms $f\colon c_1 \to c_2$.
\end{defn}

\begin{lem}\label{prop:dagger-on-Herm}
The adjoint on the category $\Herm \mathcal{C}$ makes it into a dagger category.
\end{lem}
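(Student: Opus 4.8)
The plan is to exhibit $\dagger$ as a functor $(\Herm\calC)^\op \to \Herm\calC$ that is the identity on objects and squares to the identity. Since morphisms of $\Herm\calC$ are unconstrained morphisms of $\calC$, the adjoint $f^\dagger\colon c_2 \to c_1$ of $f\colon (c_1,h_1)\to(c_2,h_2)$ is automatically again a morphism $(c_2,h_2)\to(c_1,h_1)$; there is thus no well-definedness condition to verify, and the entire content lies in the three identities $(\id_c)^\dagger=\id_c$, $(g\circ f)^\dagger = f^\dagger\circ g^\dagger$, and $(f^\dagger)^\dagger = f$. Throughout I write $d^2 := d^\op\circ d$ for the endofunctor obtained by applying the anti-involution twice.

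First I would settle functoriality, which uses only that $d$ is a functor. For identities, $(\id_c)^\dagger = h^{-1}\circ d(\id_c)\circ h = h^{-1}\circ h = \id_c$. For composites, contravariance of $d$ gives $d(g\circ f)=d(f)\circ d(g)$, so that
\[
(g\circ f)^\dagger = h_1^{-1}\circ d(f)\circ d(g)\circ h_3 = \bigl(h_1^{-1}\circ d(f)\circ h_2\bigr)\circ\bigl(h_2^{-1}\circ d(g)\circ h_3\bigr) = f^\dagger\circ g^\dagger,
\]
the cancellation being the inserted pair $h_2\circ h_2^{-1}$. This already shows $\dagger$ is a contravariant, object-fixing functor.

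The heart of the argument is the involution identity $(f^\dagger)^\dagger = f$, and this is where the defining axiom of a Hermitian fixed point is essential. The fixed point condition reads $d(h)\circ\eta_c = h$, so $d(h_i)=h_i\circ\eta_{c_i}^{-1}$ and, applying $d$ to the inverse, $d(h_i^{-1})=d(h_i)^{-1}=\eta_{c_i}\circ h_i^{-1}$. Expanding and using contravariance of $d$,
\[
(f^\dagger)^\dagger = h_2^{-1}\circ d\bigl(h_1^{-1}\circ d(f)\circ h_2\bigr)\circ h_1 = h_2^{-1}\circ d(h_2)\circ d^2(f)\circ d(h_1^{-1})\circ h_1.
\]
Substituting the two rewritten factors $d(h_2)=h_2\circ\eta_{c_2}^{-1}$ and $d(h_1^{-1})=\eta_{c_1}\circ h_1^{-1}$, all four factors $h_i^{\pm1}$ cancel in adjacent pairs and leave $\eta_{c_2}^{-1}\circ d^2(f)\circ\eta_{c_1}$. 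Naturality of $\eta\colon\id_\calC\Rightarrow d^2$ states precisely that $d^2(f)\circ\eta_{c_1}=\eta_{c_2}\circ f$, whence $(f^\dagger)^\dagger = \eta_{c_2}^{-1}\circ\eta_{c_2}\circ f = f$.

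I expect the only real obstacle to be bookkeeping of variances: making sure that $d$ reverses composition and inverses with the correct sources and targets, and that the $\eta$-factors emerging from the fixed point axiom assemble into exactly one naturality square. It is worth noting that this lemma uses only the Hermitian fixed point condition and naturality of $\eta$; the extra coherence between $\eta$ and $d$ (that $\eta_{dc}$ and $d(\eta_c)$ be mutually inverse) plays no role here.
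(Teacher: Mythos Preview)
Your proof is correct and follows essentially the same route as the paper: verify $\id^\dagger=\id$ and $(gf)^\dagger=f^\dagger g^\dagger$ by cancellation, then expand $(f^\dagger)^\dagger$, use the Hermitian fixed point condition to replace $h_i^{-1}\circ d(h_i)$ and $d(h_i^{-1})\circ h_i$ by $\eta_{c_i}^{\mp1}$, and finish with the naturality square for $\eta$. Your closing remark that the coherence $\eta_{dc}=d(\eta_c)^{-1}$ is unused is accurate and a nice observation not made explicit in the paper.
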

\begin{proof}
Let $(c,h)$ be an object of $\mathcal{C}$ with Hermitian fixed point $h\colon c \to dc$.
We have that 
\[
\id_c^\dagger = h^{-1}\circ  d (\id_c) \circ h = h^{-1} \circ \id_{dc} \circ h = \id_c.
\]
If $f\colon (c_1,h_1) \to (c_2,h_2)$ and $g\colon (c_2,h_2) \to (c_3,h_3)$, then $(g \circ f)^\dagger = f^\dagger \circ g^\dagger$ follows from the fact that
\[
c_3 \overset{h_{3}}{\cong} dc_3 \xrightarrow{dg} dc_2 \overset{h_{2}^{-1}}{\cong} c_2 \overset{h_{2}}{\cong} dc_2 \xrightarrow{df} dc_1 \overset{h_{1}}{\cong} c_1
\]
is equal to 
\[
c_3 \overset{h_{3}}{\cong} dc_3 \xrightarrow{d(g \circ f)}  dc_1 \overset{h_{1}}{\cong} c_1
\]
by functoriality of $d$.
Now $f^{\dagger \dagger}$ is the composition
\[
c_1 \overset{h_{1}}{\cong} dc_1 \overset{dh^{-1}_{1}}{\cong} d^2 c_1 \xrightarrow{d^2 f} d^2 c_2 \overset{dh_2}{\cong} dc_2 \overset{h_2^{-1}}{\cong} c_2.
\]
Using the fixed point property of a Hermitian structure, this composition is equal to
\[
c_1 \overset{\eta_{c_1}}{\cong} d^2 c_1 \xrightarrow{d^2 f} d^2 c_2 \overset{\eta_{c_2}^{-1}}{\cong} c_2.
\]
By naturality of $\eta$ this composition is equal to $f$.
\end{proof}

\begin{ex}
The Hermitian completion of $(\mathcal{C} = \Vectfd_\C,d = \overline{(.)}^*)$ is the dagger category of Hermitian vector spaces we considered in example \ref{ex:Hilbpmequiv}.
So $\Herm \Vectfd_\C = \Herm_\C$
\end{ex}

\begin{rem}
Unlike for finite-dimensional vector spaces, $\overline{(.)}^*$ does not define an anti-involution on infinite-dimensional vector spaces.
Indeed, even though there is still a well-defined bidual map $\eta\colon V \to \overline{\overline{V}^*}^*$, it is only injective but not surjective.
Hence the dagger category of all Hilbert spaces can not be constructed in a similar fashion as the last example. 
It would be interesting to study a weakened version of anti-involutive categories in which $\eta$ is not necessarily an isomorphism and Hermitian fixed points $h\colon c \to dc$ are not necessarily isomorphisms.
The technical disadvantage of such a theory would be that we might have to restrict the morphisms in the Hermitian completion to those that admit an adjoint, for example the bounded operators for Hilbert spaces.
An alternative approach would be to work with a certain category of topological vector spaces and use a continuous linear dual.
\end{rem}

\begin{ex}
Let $\mrT \mathcal{C} \in \ICat$ be a dagger category $\mathcal{C}$ seen as an anti-involutive category.
The Hermitian completion $\Herm (\mrT \mathcal{C})$ concretely consists of pairs $(c,\tau)$, where $\tau\colon c \to c$ is invertible and self-adjoint.
For $f\colon (c_1, \tau_1) \to (c_2, \tau_2)$ the new adjoint $*$ on the Hermitian completion is defined as $f^* = \tau_2 \circ f^\dagger \circ \tau_1^{-1}$.
For example, starting with the dagger category of Hilbert spaces, new objects are triples $(V,(-,-),A)$ consisting of a Hilbert space and a self-adjoint invertible linear operator on $V$.
The adjoints of morphisms between such objects are defined using the Hermitian pairing $(-,A-)$ on $V$.
The resulting dagger category is unitarily equivalent to the dagger category of Hermitian vector spaces.
\end{ex}

\begin{ex}
    Again take $\mathcal{C} = \Vectfd_\C$ to be the category of finite-dimensional vector spaces.
    Now define $d$ to be the dual $(-)^*\colon \mathcal{C} \to \mathcal{C}^{op}$ and $\eta\colon V \to V^{**}$ the evaluation map.
    Then a Hermitian fixed point on a vector space $V$ is equivalent to a nondegenerate symmetric bilinear form on $V$.
    More generally, we could take $\mathcal{C}$ to be finite-dimensional complex representations of a finite group $G$.
    Since for a general $G$-representation $V$, there is no $G$-equivariant isomorphism $V \cong V^*$ there are representations that do not admit the structure of a Hermitian fixed point at all.
\end{ex}

\begin{lem}
    \label{functorsfix}
    Let $(\mathcal{C},d_\calC, \eta_\calC), (\calD, d_\calD, \eta_\calD)$ be two anti-involutive categories. 
    Then there is an anti-involutive structure on the category $\Fun(\calC, \calD)$ of functors between them, 
    such that the category of $\Z/2$-fixed points $\Fun(\mathcal{C}, \mathcal{D})^{\fix}$ is the category $\Hom_{\ICat}(\mathcal{C}, \mathcal{D})$ of $1$-morphisms in $\ICat$.
\end{lem}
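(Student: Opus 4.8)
The plan is to manufacture the anti-involution on $\Fun(\calC,\calD)$ out of $d_\calC$ and $d_\calD$, and then to check that a Hermitian fixed point of it is precisely the data of an involutive functor. First I would define the contravariant endofunctor $D\colon \Fun(\calC,\calD) \to \Fun(\calC,\calD)^{\op}$ by $D(F) = d_\calD\circ F\circ d_\calC$, so that on objects $D(F)(x) = d_\calD(F(d_\calC x))$, and on a natural transformation $\alpha\colon F\Rightarrow G$ the component of $D(\alpha)\colon D(G)\Rightarrow D(F)$ is $d_\calD(\alpha_{d_\calC x})$; note that it is the contravariance of $d_\calD$ that reverses the direction and lands $D$ in the opposite category. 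For the coherence I would set $(\eta^{\Fun}_F)_x := d_\calD^2\bigl(F((\eta_\calC)_x)\bigr)\circ (\eta_\calD)_{Fx}$, i.e.~the whiskering of $\eta_\calD$ and $\eta_\calC$ into $F$. Functoriality of $D$ and naturality of $\eta^{\Fun}$ (in $x$ and in $F$) are immediate from the corresponding properties of $d_\calC$, $d_\calD$, $\eta_\calC$, $\eta_\calD$; the only substantive point is the anti-involution axiom that $\eta^{\Fun}_{D(F)}$ and $D(\eta^{\Fun}_F)$ are mutually inverse, which I would reduce by the interchange law to the two separate axioms that $(\eta_\calC)_{d_\calC c}$, $d_\calC((\eta_\calC)_c)$ and $(\eta_\calD)_{d_\calD e}$, $d_\calD((\eta_\calD)_e)$ are inverse pairs.

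Next I would set up the dictionary between the two descriptions of objects. Unwinding the definition of a Hermitian fixed point, a fixed point of $D$ is a natural isomorphism $h\colon F\Rightarrow D(F)$ with components $h_x\colon Fx\to d_\calD(F(d_\calC x))$, subject to $D(h)\circ \eta^{\Fun}_F = h$. I claim these are in bijection with involutive structures $\varphi\colon F^{\op}\circ d_\calC \cong d_\calD\circ F$ on $F$. Reading $\varphi_x$ as the isomorphism $d_\calD(Fx)\to F(d_\calC x)$ in $\calD$, the translation sends $\varphi$ to the $h$ with $h_x := d_\calD(\varphi_x^{-1})\circ (\eta_\calD)_{Fx}$; this is a bijection on natural isomorphisms precisely because $d_\calD$ is an equivalence (hence a bijection on hom-sets) and $\eta_\calD$ is invertible, so $\varphi_x$ is recovered uniquely from $h_x$. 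The crux is then to show that, under this translation, the fixed-point equation $D(h)\circ \eta^{\Fun}_F = h$ is equivalent to the coherence square in the definition of an involutive functor relating $\varphi$ with $\eta_\calC$ and $\eta_\calD$; this is a diagram chase using naturality and the anti-involution axioms, entirely parallel to the double-adjoint verification in Lemma \ref{prop:dagger-on-Herm}.

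Finally I would match the morphisms. A morphism $(F,h_F)\to (G,h_G)$ in $\Fun(\calC,\calD)^{\fix}$ is a natural transformation $\alpha\colon F\Rightarrow G$ satisfying $D(\alpha)\circ h_G\circ \alpha = h_F$; feeding in the dictionary $h\leftrightarrow\varphi$ turns this, componentwise, into exactly the commuting square defining an involutive natural transformation $\alpha\colon (F,\varphi)\Rightarrow (G,\psi)$. Since both translations leave the underlying functor and the underlying natural transformation untouched, the assignment is an isomorphism of categories $\Fun(\calC,\calD)^{\fix}\cong \Hom_{\ICat}(\calC,\calD)$, as required.

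I expect the main obstacle to be purely the bookkeeping in the second and third steps: matching the Hermitian fixed-point triangle against the involutive-functor square, and the two morphism conditions, both of which mix the coherence isomorphisms $\eta_\calC$, $\eta_\calD$ and require careful tracking of variance when whiskering the contravariant functor $d_\calD$ past natural transformations. No genuinely new idea beyond naturality and the interchange law should be needed, but keeping the op-conventions and the placement of $(\eta_\calD)^{-1}$ versus $\eta_\calC$ consistent is where the real care lies.
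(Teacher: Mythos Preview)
Your proposal is correct and follows essentially the same route as the paper: define $D(F)=d_\calD\circ F\circ d_\calC$ with $\eta^{\Fun}$ the horizontal whiskering of $\eta_\calD$ and $\eta_\calC$, then translate between Hermitian fixed points $h$ and involutive structures $\varphi$ and check that morphisms in $\calC^{\fix}$ match involutive natural transformations. The only cosmetic difference is that the paper writes the bijection in the direction $\psi\mapsto\varphi$ using $\eta_\calC$ (setting $\varphi=(\id_{d_\calD F}\bullet\eta_\calC^{-1})\circ(\psi\bullet\id_{d_\calC})$), whereas you write it as $\varphi\mapsto h$ using $\eta_\calD$; these are equivalent choices of inverse bijections and neither requires any idea the other lacks.
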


\begin{proof}
    The functor category $\Fun(\calC, \calD)$ becomes an anti-involutive category via
    \[
        d F := d_\calD \circ F \circ d_\calC.
    \]
    Namely, we can define the anti-involution on natural transformations $\alpha\colon F_1 \Rightarrow F_2$ between functors $F_1, F_2\colon \calC \to \calD$ as the whiskering
    \[
    d \alpha := \id_{d_\calD} \bullet \alpha \bullet \id_{d_\calC},
    \]
    where we denoted horizontal composition of natural transformations with $\bullet$.
    This defines a functor $\Fun(\calC, \calD) \to \Fun(\calC, \calD)$. 
    Define the natural transformation $\eta\colon \id_{\Fun(\calC, \calD)} \Rightarrow d^2$ on $F \in {\Fun(\calC, \calD)}$ by
    \begin{align*}
        F \xrightarrow{\eta_\calD \bullet \id_F \bullet  \eta_\calC} d^2_\calD F d^2_\calC,
    \end{align*}
    which is natural by the exchange law.
    Finally, we have to show that $\eta_{d F} = d \eta_F^{-1}$ for all $F \in \Fun(\calC, \calD)$.
    This amounts to showing that $\eta_\calD \bullet \id_{d_\calD F d_\calC} \bullet \eta_\calC$ is the inverse of $\id_{d_\calC} \bullet \eta_\calD \bullet \id_F \bullet \eta_\mathcal{C} \bullet \id_{d_\calD}$.
    This holds because, since $\eta_\calD$ is part of an anti-involution it satisfies that $\eta_\calD \bullet \id_{d_\calD}$ is inverse to $\id_{d_\calC} \bullet \eta_\calD$ and similarly for $\eta_\calC$.

    A Hermitian fixed point in $\Fun(\calC, \calD)$ is equivalent to an involutive functor.
    Indeed, let $\psi\colon F \Rightarrow d_\calD F d_\calC$ be a Hermitian fixed point on $F$.
    Writing out the condition results in the commutative diagram
    \[
    \begin{tikzcd}[column sep = 50, row sep = 25]
        F c \ar[r,"\psi_c"] \ar[d,"{F ((\eta_\calC)_c)}", swap] & d_\calD F d_\calC c  
        \\
        F d_\calC^2 c \ar[r,"{(\eta_\calD)_{F d_\calC^2 c}}"] & d_\calD^2 F d_\calC^2 c \ar[u,"{d_\calD \psi_{d_\calC c}}", swap]
    \end{tikzcd}
    \]
    for every object $c$.
    A diagram chase shows that under mapping $\psi$ to the composition $\varphi$ defined by
    \[
    \varphi\colon Fd_\calC \xRightarrow{\psi \bullet \id_{d_\calD}} d_\calD F d^2_\calC \xRightarrow{\id_{d_\calD F} \bullet \eta_\calC^{-1}} d_\calD F
    \]
    this becomes the condition that $(F,\varphi)$ is an involutive functor. 
    So we see that $\Herm \Fun(\mathcal{C}, \mathcal{D})$ is the category with objects involutive functors and as morphisms all natural transformations.
    A natural transformation is involutive if and only if it is an isometry in $\Herm \Fun(\mathcal{C}, \mathcal{D})$.
\end{proof}

\begin{ex}
\label{ex:dualhermstructure}
    Let $h\colon c \to dc$ be a Hermitian fixed point.
    Then $(dh)^{-1}\colon dc \to d^2 c$ is a Hermitian fixed point structure on $d c$.
    Indeed, taking $d$ of the diagram saying that $h$ is a fixed point and using that $d \eta_c = \eta_{dc}^{-1}$ yields
    \[
    \begin{tikzcd}
        dc \ar[r,"\eta_{dc}"] & d^3 c  & \ar[l,"d^2 h", swap] d^2 c \ar[ll, "dh", bend left]
    \end{tikzcd}
    \]
    Using that $(d^2 h)^{-1} = d(dh^{-1})$, this diagram indeed expresses the fact that $(dh)^{-1}\colon dc \to d^2 c$ is a Hermitian fixed point.
    Note that by construction $h\colon c \to dc$ is a unitary isomorphism between the objects $(c,h)$ and $(dc, (dh)^{-1})$ in the dagger category $\Herm \mathcal{C}$.
\end{ex}

\begin{rem}
We expect the discussion above to be closely related to \cite[Section 6]{egger} as follows.    
This reference considers covariant op-monoidal involutions which in certain rigid monoidal categories should be related to monoidal anti-involutions after composing with a choice of dual functor.
This relationship is shown in \cite[Section 2.2]{luukthesis} in the symmetric monoidal case.
The notion of Hermitian sesquilinear pairing in \cite[Definition 6.2]{egger} should be related to our notion of Hermitian fixed point and \cite[Lemma 6.3]{egger} should be related to our Hermitian completion.
\end{rem}

\begin{defn}
    We extend the construction of definition \ref{defn:Herm} to a $2$-functor
    \[
        \Herm \colon \ICat \longrightarrow \dCat
    \]
    as follows.
    For an involutive functor $(F, \varphi)\colon (\mcC, d, \eta) \to (\mcD, d, \rho)$ we define 
    \[
    \operatorname{Herm} F\colon \Herm \mathcal{C} \to \Herm \mathcal{D}
    \]
    on objects by 
    $\Herm F(c,h) = (F(c), h_F := \varphi_c \circ F(h))$,
    and on morphisms by $\Herm F(f) = F(f)$.
    For an involutive natural transformation 
    $\alpha \colon (F,\varphi) \to (F',\varphi')$ 
    we define $\Herm \alpha \colon \Herm F \to \Herm F'$ by $(\Herm \alpha)_c := \alpha_c$.
\end{defn}

\begin{lem}
    The above yields a well-defined $2$-functor.
\end{lem}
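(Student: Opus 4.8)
The plan is to check separately that $\Herm$ is well-defined on $1$-morphisms and on $2$-morphisms, and then that it preserves identities and composites; on objects there is nothing to do, since $\Herm\mcC$ is a dagger category by Lemma~\ref{prop:dagger-on-Herm}. All of the remaining steps are diagram chases that each use a single coherence identity together with naturality, so the argument is a matter of organizing these correctly rather than of finding a new idea.

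For a $1$-morphism $(F,\varphi)\colon(\mcC,d,\eta)\to(\mcD,d,\rho)$ I would first verify that $\Herm F$ actually takes values in $\Herm\mcD$, i.e.\ that $h_F:=\varphi_c\circ F(h)$ is a Hermitian fixed point on $F(c)$. Writing out the fixed point condition, this is the identity $d(h_F)\circ\rho_{F(c)}=h_F$, and I would prove it as follows: rewrite $d(\varphi_c)\circ\rho_{F(c)}=\varphi_{d c}\circ F(\eta_c)$ using the coherence square in the definition of an involutive functor (evaluated at $c$), commute $d F(h)$ past $\varphi$ using naturality of $\varphi$ at the morphism $h\colon c\to d c$, and finally collapse $F(d h)\circ F(\eta_c)=F(d h\circ\eta_c)=F(h)$ using that $h$ is itself a fixed point in $\mcC$. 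I would then check that $\Herm F$ is a dagger functor. Functoriality on underlying morphisms is inherited from $F$, so the only point is preservation of adjoints; expanding $(\Herm F f)^\dag$ with respect to the structures $h_{1,F}$ and $h_{2,F}$ and comparing with $F(f^\dag)$, the claim reduces to $\varphi_{c_1}^{-1}\circ d(F f)\circ\varphi_{c_2}=F(d f)$, which is precisely the naturality square of $\varphi$ at $f$.

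For a $2$-morphism I would verify that $\Herm\alpha$, with components $(\Herm\alpha)_c=\alpha_c$, is an isometric natural transformation. Naturality is inherited from $\alpha$, and morphisms in $\Herm\mcD$ are just morphisms of $\mcD$, so the content is the isometry condition $\alpha_c^\dag\circ\alpha_c=\id_{F(c)}$. Expanding the adjoint with respect to $h_F$ and $h_{F'}$, this is equivalent to $d(\alpha_c)\circ h_{F'}\circ\alpha_c=h_F$; I would substitute $h_{F'}=\varphi'_c\circ F'(h)$, replace $F'(h)\circ\alpha_c$ by $\alpha_{d c}\circ F(h)$ using naturality of $\alpha$, and then apply the defining square of an involutive natural transformation in the form $d(\alpha_c)\circ\varphi'_c\circ\alpha_{d c}=\varphi_c$ to collapse the left-hand side to $\varphi_c\circ F(h)=h_F$.

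Finally I would address $2$-functoriality. Preservation of identities is immediate, since the identity involutive structure is the identity and then $h_{\Id}=h$; preservation of vertical composition of $2$-morphisms and of horizontal composition on components is likewise immediate because $\Herm$ is the identity on underlying morphisms and components. \textbf{The main obstacle} is preservation of composition of $1$-morphisms: here I would compute the Hermitian structure that $\Herm(G\circ F)$ assigns to $(c,h)$ directly from the composite coherence datum recorded in the definition of composition of involutive functors, and compare it with the structure $\Herm G(\Herm F(c,h))$. This is the one place where the nontrivial composition rule for the $\varphi$-data must be reconciled with the formula $h_F=\varphi_c\circ F(h)$; it is pure bookkeeping, but it is the only genuinely non-local check, whereas every other step is a single application of a coherence square together with naturality.
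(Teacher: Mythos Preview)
Your proposal is correct and follows essentially the same route as the paper's proof: the same three ingredients (the involutive-functor coherence square, naturality of $\varphi$, and the fixed-point property of $h$) establish that $h_F$ is Hermitian; naturality of $\varphi$ alone gives dagger preservation; naturality of $\alpha$ together with the involutive-natural-transformation square gives the isometry condition; and compatibility with composition of $1$-morphisms reduces to the definition of the composite $\varphi$-datum. The only cosmetic difference is that the paper packages each of these checks as a single commutative diagram rather than a chain of equalities.
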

\begin{proof}
    We have already checked that $\Herm(\calC, d, \eta)$ is indeed a dagger category,
    so next we need to verify that $\Herm(F,\varphi)$ is a dagger functor.
    First, note that $h_F := \varphi_c \circ F(h)\colon F(c) \to F(dc) \cong dF(c)$ is indeed a Hermitian fixed point because of the diagram:
    \[\begin{tikzcd}
	{F(c)} & {F(dc)} & {dF(c)} \\
	& {F(ddc)} \\
	{ddF(c)} & {dF(dc)} & {dF(c)}
	\arrow["{F(h)}", from=1-1, to=1-2]
	\arrow["{\varphi_c}", from=1-2, to=1-3]
	\arrow["{F(\eta_c)}"', from=1-1, to=2-2]
	\arrow["{\eta_{F(c)}}"', from=1-1, to=3-1]
	\arrow["{F(dh)}"', from=2-2, to=1-2]
	\arrow["{d\varphi_c}", from=3-1, to=3-2]
	\arrow["{dF(h)}", from=3-2, to=3-3]
	\arrow["{\varphi_{dc}}"{description}, from=2-2, to=3-2]
	\arrow[Rightarrow, no head, from=1-3, to=3-3]
	\arrow["{h_F}", bend left = 25, from=1-1, to=1-3]
	\arrow["{d(h_F)}"{description}, bend right = 25, from=3-1, to=3-3]
    \end{tikzcd}\]
    Here the triangle commutes because $h$ is a Hermitian fixed point,
    the trapezoid commutes because $\varphi$ is part of an involutive functor,
    and the rectangle commutes because $\varphi$ is a natural transformation.
    
    $\Herm F$ is certainly functorial seeing as morphisms in $\Herm \calC$ are simply composed by composing them in $\calC$.
    To show it is a dagger functor, let 
    \[
    f\colon (c_1, h) \to (c_2, h')
    \]
    be a morphism in $\Herm \mathcal{C}$.    
    Then    
    \[ 
    \Herm F(f^\dagger) = F( c_2 \xrightarrow{h'} dc_2 \xrightarrow{df} dc_1 \xrightarrow{h^{-1}} c_1) 
    = F(h)^{-1} F(df) F(h')    
    \] 
    Recall that since $F$ is involutive, the diagram    
    \[   
    \begin{tikzcd} 
    F(dc_2)  \arrow[d, "\varphi_{c_2}", swap] \arrow[r, "F(df)"] & F(dc_1) \arrow[d, "\varphi_{c_1}"] 
    \\   
    dF(c_2) \arrow[r, "dF(f)"] & dF(c_1)  
    \end{tikzcd}  
    \]    
    commutes.
    Looking at the definition of $h_F , h'_F$, we obtain 
    \[
    \Herm F(f^\dagger) = \Herm F(f)^\dagger.
    \]
    To conclude that $\Herm$ is a $1$-functor we need to check that for two composable involutive functors $(F_1, \varphi_1)$ and $(F_2, \varphi_2)$ we have that 
    $\Herm(F_2) \circ \Herm(F_1) = \Herm(F_2 \circ F_1)$.
    It will suffice to check that both sides do the same on an object $(c, h)$.
    The two resulting Hermitian fixed points on $F_2F_1(c)$ are
    \[\begin{tikzcd}
	{F_2F_1(c)} & {F_2F_1(dc)} && {dF_2F_1(c)} \\
	{F_2F_1(c)} & {F_2F_1(dc)} & {F_2dF_1(c)} & {dF_2F_1(c)}
	\arrow["{F_2F_1(h)}", from=1-1, to=1-2]
	\arrow["{(\varphi_{12})_c}", from=1-2, to=1-4]
	\arrow["{F_2F_1(h)}", from=2-1, to=2-2]
	\arrow[equal, from=1-1, to=2-1]
	\arrow[equal, from=1-2, to=2-2]
	\arrow["{(\varphi_1)_c}", from=2-2, to=2-3]
	\arrow["{(\varphi_2)_{F_1c}}", from=2-3, to=2-4]
	\arrow[Rightarrow, no head, from=1-4, to=2-4]
    \end{tikzcd}\]
    These are indeed the same: the right rectangle commutes because of how the coherence isomorphism $\varphi_{12}$ of the composite functor is defined.
    
    Finally, we need to consider the effect of $\Herm$ on $2$-morphisms.
    Here all there is to check that $\Herm(\alpha)$ is indeed an isometry. 
    This follows from the diagram: 
    \[
    \begin{tikzcd}[column sep = 18]
        F(c) \ar[r,"\alpha_c"] \ar[d,"F(h)", swap] & F'(c) \ar[d,"F'(h)"]
        \\
        F(dc) \ar[d,"\varphi_c", swap] \ar[r,"\alpha_{dc}"] & F'(dc)  \ar[d,"\varphi'_c"]
        \\
        dF(c) & dF'(c) \ar[l,"{d \alpha_c}",swap]
    \end{tikzcd}
    \]
    The squares commute because $\alpha$ is a natural transformation and because $\alpha$ is involutive with respect to $(F,\varphi)$ and $(F', \varphi')$.
    The vertical composites are the Hermitian fixed points $h_F$ and $h_F'$,
    and therefore the diagram shows that $\alpha_c$ is a one-sided inverse to $\alpha_c^\dagger = h_F^{-1} \circ d(\alpha_c) \circ h_F$.
\end{proof}

\section{Indefinite dagger categories}

The $2$-functors $\Herm$ and $\mrT$ are not inverses of each other for two reasons:
\begin{enumerate}
    \item The anti-involutive category $\mrT(\calC, \dag)$ has the property that every object admits at least one Hermitian fixed point structure.
    This is not true for every anti-involutive category, for instance the discrete category $\mathbb{Z}/2$ with the non-trivial swap, and therefore $\mrT$ is not surjective up to equivalence.
    \item There exist dagger categories that are not unitarily equivalent, but become equivalent as anti-involutive categories after applying $\mrT$.
\end{enumerate}

However, we will still be able to show that $\Herm$ and $T$ restrict to a biequvialence between certain full $2$-subcategories.
On the side of the anti-involutive categories we make the following restriction, motivated by point 1 above:

\begin{defn}
    Let $\mathcal{C}^{\exfix}$ denote the full subcategory of the anti-involutive category $\mathcal{C}$ on the objects $c$ that admit some Hermitian fixed point $h\colon c \to dc$.
    This is again an anti-involutive category, also see example \ref{ex:dualhermstructure}.
    Let $\ICat^{\exfix} \subset \ICat$ denote the full $2$-subcategory on those anti-involutive categories in which every object admits some Hermitian fixed point structure.
\end{defn}

To find the correct property on the dagger category side, we note:
\begin{ex}
Consider the dagger category $\Herm_\C$ as a category with anti-involution.
Its Hermitian completion is again dagger-equivalent to $\Herm_\C$.
However, for $\Hilb$ it is instead $\Herm_\C$ which is not dagger-equivalent to $\Hilb$.
Recall that in $\Hilb$ an operator $T: V \to V$ is called positive definite if it is of the form $T = A^\dag A$ for some isomorphism $A: V \to W$.
Note that in $\Hilb$ not every self-adjoint automorphism is positive definite.
However, in $\Herm_\C$ it turns out that every self-adjoint automorphism $T: V \to V$ can be written as $T = A^\dag A$ for some isomorphism $A: V \to W$ to a suitable (possibly mixed signature) Hermitian vector space.
 This is the essential property that $\Herm_\C$ has and $\Hilb$ lacks.
\end{ex}

Motivated by the above example, we want to single out dagger categories in which `every self-adjoint automorphism is positive definite', compare Remark \ref{rem:positive}. 
In analogy with $\Herm_\C$, we think of such dagger categories as containing `all Hermitian forms, even all the indefinite ones'. 

\begin{defn}
\label{defn:indefinite}
    We say that a dagger category $\calD$ is \emph{indefinite} if 
    for any object $x \in \calD$ and any self-adjoint automorphism $a = a^\dag\colon x \cong x$ there is another object $y \in \calD$ and an isomorphism $f\colon x \cong y$ such that $a = f^\dag \circ f$.
    We let $\dCat^\indef \subset \dCat$ denote the full sub-$2$-category on the indefinite complete dagger categories.
\end{defn}

\begin{lem}
    For any anti-involutive category $(\calC,d,\eta)$ the dagger category $\Herm(\calC)$ is indefinite.
\end{lem}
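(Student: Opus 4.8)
The plan is to reduce the claim to a direct construction rather than any abstract argument. Given an object $(c,h) \in \Herm(\calC)$ and a self-adjoint automorphism $a = a^\dagger \colon (c,h) \to (c,h)$, I would exhibit the witnessing object $y$ and isomorphism $f$ by keeping the underlying object fixed and only twisting the Hermitian structure by $a$. Concretely, I would set $y := (c,\, h \circ a)$ and take $f := \id_c$, regarded as a morphism $(c,h) \to (c, h\circ a)$ in $\Herm(\calC)$. Since morphisms in $\Herm(\calC)$ are just morphisms of the underlying category, $f$ is automatically an isomorphism, so the only real content is (i) checking that $h \circ a$ is again a Hermitian fixed point on $c$, and (ii) verifying the factorization $a = f^\dagger \circ f$.

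For step (i), I would first translate self-adjointness of $a$ into a usable identity. Unwinding the definition of the adjoint with respect to $h_1 = h_2 = h$ gives $a^\dagger = h^{-1} \circ d(a) \circ h$, so the hypothesis $a = a^\dagger$ is equivalent to the commutation relation
\[
    d(a) \circ h = h \circ a .
\]
Now I would check the fixed-point condition $d(h')\circ \eta_c = h'$ for $h' := h \circ a$. Using contravariance of $d$ we have $d(h \circ a) = d(a) \circ d(h)$, and since $h$ is itself a Hermitian fixed point, $d(h) \circ \eta_c = h$. Combining these with the commutation relation yields
\[
    d(h') \circ \eta_c = d(a) \circ d(h) \circ \eta_c = d(a)\circ h = h \circ a = h',
\]
so $h'$ is indeed a Hermitian fixed point (and it is an isomorphism, being a composite of isomorphisms).

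For step (ii), the adjoint of $f = \id_c\colon (c,h) \to (c,h')$ is $f^\dagger = h^{-1} \circ d(\id_c) \circ h' = h^{-1}\circ h'$, whence $f^\dagger \circ f = h^{-1} \circ h' = h^{-1}\circ h \circ a = a$, exactly as required. I expect there to be no genuine obstacle: the whole argument is a two-line diagram chase once the correct twist $h\mapsto h\circ a$ is identified. The only points to get right are the bookkeeping of variances — that $d$ is contravariant so $d(h\circ a) = d(a)\circ d(h)$, and that self-adjointness reads as $d(a)\circ h = h\circ a$ rather than its transpose — and, should the phrase ``another object'' in Definition \ref{defn:indefinite} be read strictly, to note that $(c, h\circ a)$ is literally a distinct object from $(c,h)$ whenever $a \neq \id_c$, while the degenerate case $a = \id_c$ is handled directly by $f = \id_c$ and $y = (c,h)$.
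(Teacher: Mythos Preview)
Your proof is correct and essentially identical to the paper's: both set $c'=c$, $f=\id_c$, $h'=h\circ a$, and verify the Hermitian fixed point condition via the chain $d(h')\circ\eta_c = d(a)\circ d(h)\circ\eta_c = d(a)\circ h = h\circ a = h'$. The paper is slightly terser (it does not spell out step (ii) or the variance bookkeeping), but the argument is the same.
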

\begin{proof}
    A self-adjoint automorphism is an isomorphism $a\colon (c,h) \to (c,h)$ such that
    \[
        a = a^\dagger = h^{-1} \circ d(a) \circ h.
    \]
    We need to find an isomorphism $f\colon (c,h) \to (c', h')$ such that 
    \[
        a \stackrel{?}{=} f^\dagger \circ f = h^{-1} \circ d(f) \circ h' \circ f.
    \]
    Indeed, this can always be achieved by setting $c'=c$, $f = \id_c$, and $h' = h \circ a$.
    It just remains to check that $h'$ is indeed a valid Hermitian fixed point on $c$.
    For this we consider 
    \[
        d(h') \circ \eta_c = d(a) \circ d(h) \circ \eta_c = d(a) \circ h = h \circ a = h'.
    \]
\end{proof}

We now begin to construct the unit and counit for the adjunction between $\mrT$ and $\Herm$.

\begin{defn}\label{defn:THerm->id}
    For every anti-involutive category $(\calC, d, \eta)$ we define an involutive functor
    \[
        (K_\calC, \varphi_\calC) \colon \mrT(\Herm(\calC, d, \eta)) \too (\calC, d, \eta)
    \]
    by letting $K_\calC$ be the functor $(c, h) \mapsto c$ and $f \mapsto f$,
    and letting $\varphi_\calC\colon  K_\calC^{op} \circ \dagger_{\mrT(\Herm(\calC))} \cong d \circ K_\calC$ be the natural transformation given by 
    \[
        \varphi_{(c,h)} := (h\colon c \to d(c)).
    \]
\end{defn}

\begin{lem}\label{lem:counit-equivalence}
    The involutive functor $(K_\calC, \varphi_\calC)$ is well-defined, natural in $\calC$, and it is an equivalence of anti-involutive categories onto the full subcategory $\calC^\exfix \subset \calC$.
\end{lem}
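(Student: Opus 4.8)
The plan is to establish the three claims of Lemma~\ref{lem:counit-equivalence} in turn: (i) that $(K_\calC,\varphi_\calC)$ is a well-defined involutive functor, (ii) that it is natural in $\calC$ (i.e.\ forms a $2$-natural transformation $K\colon \mrT\circ\Herm \Rightarrow \Id_{\ICat}$), and (iii) that it is an equivalence of anti-involutive categories onto the full subcategory $\calC^\exfix$. The functoriality in part (i) is immediate since $K_\calC$ acts as the identity on morphisms; the real content is checking that $\varphi_\calC$ satisfies the coherence square relating $\eta$ and $\varphi$ in the definition of an involutive functor. I would unwind this square at an object $(c,h)$: the component $\varphi_{(c,h)} = h$, and since the dagger on $\mrT(\Herm\calC)$ is trivial on objects (so the relevant $\eta$ for $\mrT(\Herm\calC)$ is the identity), the coherence condition should reduce exactly to the Hermitian-fixed-point equation $dh\circ\eta_c = h$. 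This is precisely the defining property of $h$, so the square commutes essentially by definition.

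For part (ii), naturality, I would take an involutive functor $(F,\varphi)\colon(\calC,d,\eta)\to(\calD,d,\rho)$ and verify that the square of involutive functors built from $K_\calC$, $K_\calD$, $\mrT(\Herm F)$, and $F$ commutes (strictly or up to the canonical identification). On objects this is the statement that $K_\calD(\Herm F(c,h)) = F(c) = F(K_\calC(c,h))$, which holds since $\Herm F(c,h) = (F(c), \varphi_c\circ F(h))$ and $K_\calD$ forgets the Hermitian structure. The only nontrivial check is that the involutive structures match, i.e.\ that the $\varphi$-data of the two composite involutive functors agree; here one compares $\varphi_\calD^{(F)}$ against $\varphi_\calC$ whiskered with $F$, and the relevant component is the Hermitian structure $h_F = \varphi_c\circ F(h)$ produced by $\Herm F$, which should match on the nose by construction.

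The main content is part (iii). The functor $K_\calC\colon \mrT(\Herm\calC) \to \calC$ sends $(c,h)\mapsto c$, so its essential image is exactly the set of objects $c$ admitting \emph{some} Hermitian fixed point, which is by definition $\calC^\exfix$; thus $K_\calC$ is essentially surjective onto $\calC^\exfix$ and I would restrict the codomain to $\calC^\exfix$ accordingly. For full faithfulness, morphisms $(c_1,h_1)\to(c_2,h_2)$ in $\Herm\calC$ are by Definition~\ref{defn:Herm} \emph{all} morphisms $c_1\to c_2$ in $\calC$, and $K_\calC$ sends each to itself; hence $K_\calC$ is fully faithful as a plain functor. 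By Lemma~\ref{lem:involutive-equivalence}, an involutive functor is an equivalence in $\ICat$ precisely when its underlying functor is an equivalence of categories, so fully faithful plus essentially surjective onto $\calC^\exfix$ already yields that $(K_\calC,\varphi_\calC)$ is an equivalence of anti-involutive categories onto $\calC^\exfix$.

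The step I expect to be the main obstacle is the coherence verification in part (i): one must be careful about what the structure isomorphism $\eta$ of the anti-involutive category $\mrT(\Herm\calC)$ actually is. Since $\Herm\calC$ is a dagger category and $\mrT$ equips it with the \emph{trivial} coherence (Example~\ref{ex:dagger-is-trivial-involutive}), its $\eta$ is the identity and its dagger is the identity on objects; but the Hermitian structures $h$ live in the ambient $\calC$ with its nontrivial $\eta_\calC$, so the coherence square mixes trivial data on the source side with the genuine fixed-point condition $dh\circ\eta_c=h$ on the target side. Getting this bookkeeping right---confirming that the square collapses to exactly the Hermitian-fixed-point axiom and no more---is where care is needed; everything downstream (naturality and the equivalence) then follows cleanly from Lemma~\ref{lem:involutive-equivalence} and the definitions.
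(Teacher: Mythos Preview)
Your approach matches the paper's almost exactly: the paper also reduces well-definedness to the Hermitian fixed-point axiom, dismisses naturality in $\calC$ as following from the construction, and invokes Lemma~\ref{lem:involutive-equivalence} for the equivalence claim. There is one small omission in your part~(i), however. Before checking the coherence square relating $\eta$ and $\varphi$, you must verify that $\varphi_\calC$ is a \emph{natural transformation} at all: for a morphism $f\colon (c_1,h_1)\to(c_2,h_2)$ in $\Herm\calC$, the naturality square for $\varphi_\calC$ reads $h_1\circ f^\dag = d(f)\circ h_2$, which holds precisely by the definition of $f^\dag$ in $\Herm\calC$. The paper checks this explicitly as its first step; you skip directly to the coherence square. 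The check is routine, but an involutive functor requires both conditions, so it should not be left implicit.
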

\begin{proof}
    To check that $\varphi$ is indeed a natural transformation we need to consider for each morphism $f\colon (c_1,h_1) \to (c_2, h_2)$ the square:
    \[
        \begin{tikzcd}[column sep = huge]
            c_1 \ar[r, "\varphi_{(c_1,h_1)} = h_1"] &
            d(c_1) \\
            c_2 \ar[r, "\varphi_{(c_2,h_2)} = h_2"] \ar[u, "f^\dag"] & 
            d(c_2) \ar[u, "d(f)"'].
        \end{tikzcd}
    \]
    This indeed commutes by the definition of $f^\dag$.
    This natural transformation further has to satisfy that for each $(c,h)$ the square
    \[
        \begin{tikzcd}[column sep = large]
            K_\calC(c,h) \ar[d, "\eta_{K_\calC(x)}", swap] \ar[r, equal] & (K_\calC \circ \dag^{op} \circ \dag)(c,h) \ar[d, "\varphi_{(c,h)} = h"]\\
            (d^{op}\circ d\circ K_\calC)(c,h) \ar[r, "d^{op}(\varphi_{(c,h)})"] & (d^{op} \circ K_\calC^{op} \circ \dag)(c, h) 
        \end{tikzcd}
    \]
    commutes.
    Upon closer inspection this is exactly the triangle that commutes because $h$ is is a Hermitian fixed point.
    
    It follows from the construction that $(K_\calC, \varphi_\calC)$ is natural in $\calC$.
    Moreover, $K_\calC$ is certainly fully faithful and essentially surjective onto the subcategory of $\mathcal{C}$ that admit a Hermitian fixed point, so by Lemma \ref{lem:involutive-equivalence} $(K_\calC, \varphi_\calC)$ is an equivalence in $\ICat$.
\end{proof}

\begin{defn}\label{defn:U}
    For every dagger category $(\calD, \dagger)$ we define a dagger functor
    \[
        U_{\mathcal{D}} \colon \calD \too \Herm(T(\calD))
    \]
    by sending $x$ to $(x,\id)$ and $f\colon x \to y$ to $f\colon (x,\id) \to (y,\id)$.
\end{defn}

The construction of $U_{\mathcal{D}}$ is well-defined and natural in $\calD$.
Moreover, $U_{\mathcal{D}}$ is always fully faithful and essentially surjective.
However, the more subtle question is when $U$ is surjective up to unitaries.

\begin{lem}\label{lem:unit-equivalence}
    The functor $U_\calD$ is an equivalence of dagger categories if and only if $\calD$ is indefinite.
\end{lem}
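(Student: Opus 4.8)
The plan is to reduce to the characterization of dagger equivalences in Lemma~\ref{lem:dagger-equivalence}. Since a morphism $(x,\id)\to(y,\id)$ in $\Herm(\mrT\calD)$ is by definition just a morphism $x\to y$ in $\calD$, the functor $U_\calD$ is automatically fully faithful; hence by Lemma~\ref{lem:dagger-equivalence} it is a dagger equivalence if and only if it is surjective up to unitaries. So the whole statement reduces to showing that $U_\calD$ is surjective up to unitaries exactly when $\calD$ is indefinite.

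First I would make the target category explicit. Because the anti-involution on $\mrT\calD$ is the identity on objects with $\eta=\id$, a Hermitian fixed point on $c$ is nothing but an invertible self-adjoint endomorphism $\tau=\tau^\dagger\colon c\to c$, so the objects of $\Herm(\mrT\calD)$ are exactly the pairs $(c,\tau)$ of this form. Unwinding the definition of the adjoint, the dagger of $\Herm(\mrT\calD)$ sends $f\colon (c_1,\tau_1)\to(c_2,\tau_2)$ to $f^{*}=\tau_1^{-1}\circ f^\dagger\circ\tau_2$, where $\dagger$ denotes the original dagger of $\calD$.

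The key computation is then to unwind what a unitary isomorphism out of the image of $U_\calD$ looks like. A morphism $u\colon (x,\id)\to(c,\tau)$ has adjoint $u^{*}=u^\dagger\circ\tau$, so $u$ is unitary precisely when $u^{-1}=u^\dagger\circ\tau$. Writing $g:=u^{-1}\colon c\to x$, this is equivalent to $g$ being an isomorphism satisfying $g^\dagger\circ g=\tau$. Consequently, an object $U_\calD(x)=(x,\id)$ is unitarily isomorphic to $(c,\tau)$ if and only if there is an isomorphism $g\colon c\to x$ with $g^\dagger\circ g=\tau$ --- which is exactly the condition appearing in Definition~\ref{defn:indefinite}, applied to the object $c$ and the self-adjoint automorphism $\tau$.

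With this identification in hand, both directions are immediate. If $\calD$ is indefinite, then for any object $(c,\tau)$ of $\Herm(\mrT\calD)$ indefiniteness yields an object $y$ and an isomorphism $f\colon c\to y$ with $f^\dagger\circ f=\tau$, so $U_\calD(y)$ is unitarily isomorphic to $(c,\tau)$, proving surjectivity up to unitaries. Conversely, if $U_\calD$ is surjective up to unitaries, then for any object $x$ and any self-adjoint automorphism $a=a^\dagger$ the pair $(x,a)$ is an object of $\Herm(\mrT\calD)$, so it admits a unitary isomorphism to some $U_\calD(y)=(y,\id)$; unwinding this unitary produces an isomorphism $f\colon x\to y$ with $f^\dagger\circ f=a$, which is exactly indefiniteness. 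The only step requiring genuine care is the adjoint and unitarity computation in $\Herm(\mrT\calD)$: one must keep track of the conjugation by the $\tau_i$ and of the direction of the witnessing isomorphism $g$, after which the equivalence with Definition~\ref{defn:indefinite} is a direct comparison.
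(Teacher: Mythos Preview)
Your proof is correct and follows essentially the same route as the paper: reduce to surjectivity up to unitaries via Lemma~\ref{lem:dagger-equivalence}, then unwind the adjoint in $\Herm(\mrT\calD)$ to see that a unitary between $(x,\id)$ and $(c,\tau)$ exists precisely when $\tau = f^\dagger \circ f$ for some isomorphism $f$. The paper does this computation from the side of $f^*\circ f=\id$ rather than $u^{-1}=u^*$, but the content is identical.
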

\begin{proof}
    As noted before $U_{\mathcal{D}}\colon \calD \too \Herm(T(\calD))$ is always an equivalence of categories,
    so by Lemma \ref{lem:dagger-equivalence} we only need to check when it is surjective up to unitaries.
    Suppose $(y,h) \in \Herm(T(\calD))$ is an object that is unitarily isomorphic to some object $(x, \id_x)$ in the essential image.
    Then we have an isomorphism $f\colon (y, h) \to (x,\id_x)$ satisfying $\id_{(y,h)} = f^* \circ f $.
    (Here we write $*$ for the dagger on $\Herm(T(\calD))$ to distinguish it from the dagger $\dag$ on $\calD$.)
    Spelling out the definition we see that $\id_{(y,h)} = f^* \circ f = (h^{-1} \circ f^\dag \circ \id_x) \circ f$, 
    or equivalently $h = f^\dag \circ f$.
    Therefore $U_\calD$ is surjective up to unitaries if and only if every self-adjoint automorphism $h$ can be written as $f^\dag \circ f$ with $f$ invertible,
    i.e.~if and only if $\calD$ is indefinite.
\end{proof}

Recall that a $2$-adjunction is a $\Cat_1$-enriched adjunction, i.e.~an adjunction for which the unit and counit satisfy the triangle identities \emph{strictly}. \cite{kellystreet}

\begin{thm}\label{thm:2-adjunction}
    The functors $U$ and $K$ exhibit a $2$-adjunction:
    \[
        \mrT \colon \dCat \rightleftarrows \ICat \,:\! \Herm
    \]
    and this restricts to a biequivalence between $\ICat^\exfix$ and the full sub-$2$-category $\dCat^{\rm indef}$ on the indefinite complete dagger categories.
\end{thm}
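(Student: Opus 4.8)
The plan is to assemble the $2$-adjunction from the data already constructed---the unit $U$ of Definition \ref{defn:U} and the counit $K$ of Definition \ref{defn:THerm->id}---and then cut it down to the advertised biequivalence. Since $U_\calD$ and $K_\calC$ were already shown to be natural in $\calD$ and $\calC$ respectively, the two things that remain for a strict $2$-adjunction are: first, to upgrade this naturality to $2$-naturality, i.e.\ to check compatibility with $2$-morphisms in $\dCat$ and $\ICat$ (a direct diagram chase using that $\mrT$ and $\Herm$ are already known to be $2$-functors); and second, to verify that the two triangle identities
\[
    K_{\mrT\calD} \circ \mrT(U_\calD) = \id_{\mrT\calD}
    \qquad\text{and}\qquad
    \Herm(K_\calC) \circ U_{\Herm\calC} = \id_{\Herm\calC}
\]
hold \emph{on the nose}. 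It is the strictness here, rather than mere commutativity up to $2$-isomorphism, that promotes a biadjunction to a $2$-adjunction in the sense of \cite{kellystreet}.

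I would verify the triangle identities by tracing objects and the coherence data $\varphi$ through the composites. For the first identity, $\mrT(U_\calD)$ sends $x \mapsto (x, \id_x)$ with trivial involutive structure, and $K_{\mrT\calD}$ sends $(x, h) \mapsto x$ with $\varphi_{(x,h)} = h$; hence the composite sends $x \mapsto (x,\id_x) \mapsto x$ on objects and is the identity on morphisms, while the composition rule for involutive functors feeds the trivial $\varphi$ of $\mrT(U_\calD)$ into the value $\varphi_{(x,\id_x)} = \id_x$ of $K$, so the resulting coherence datum is again the identity. For the second identity, $U_{\Herm\calC}$ sends $(c,h) \mapsto \big((c,h),\, \id_{(c,h)}\big)$ and $\Herm(K_\calC)$ sends this to $\big(K_\calC(c,h),\, (\varphi_\calC)_{(c,h)} \circ K_\calC(\id_{(c,h)})\big) = (c,h)$; again one checks the composite is the identity dagger functor. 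These are finite, mechanical checks, so I expect them to go through; the only care needed is the correct unwinding of the involutive-functor composition law and of the definition of $\Herm$ on morphisms.

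For the restriction to a biequivalence I would argue as follows. First, the two $2$-functors restrict correctly to the subcategories: for any dagger category $\calD$ every object of $\mrT\calD$ carries the Hermitian fixed point $\id_x \colon x \to x = dx$, so $\mrT$ lands in $\ICat^\exfix$; and for $\calC \in \ICat^\exfix$ the lemma that $\Herm(\calC)$ is indefinite (together with the verification that $\Herm(\calC)$ is complete) shows $\Herm$ lands in $\dCat^\indef$. Next, on these subcategories the unit and counit become equivalences: by Lemma \ref{lem:unit-equivalence}, $U_\calD$ is a dagger equivalence precisely when $\calD$ is indefinite, and by Lemma \ref{lem:counit-equivalence}, $K_\calC$ is an equivalence onto $\calC^\exfix$, which is all of $\calC$ exactly when $\calC \in \ICat^\exfix$. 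Finally, a strict $2$-adjunction whose unit and counit are equivalences in the respective $2$-categories is automatically a biequivalence, so the restricted $2$-functors $\mrT$ and $\Herm$ are mutually inverse biequivalences.

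The main obstacle I anticipate is not conceptual but is concentrated in two places. The first is the careful bookkeeping of the coherence isomorphisms $\varphi$ through both the involutive-functor composition law and the action of $\Herm$ on morphisms, in order to be sure the triangle identities hold strictly and not merely up to a unitary. The second, more genuinely delicate point given the excerpt as stated, is pinning down the meaning of ``complete'' in the definition of $\dCat^\indef$ and confirming that $\Herm(\calC)$ satisfies it for every $\calC \in \ICat^\exfix$; this is exactly what guarantees that $\Herm$ maps into the restricted subcategory, and hence that the restriction of the adjunction is well posed.
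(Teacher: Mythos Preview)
Your approach is correct and essentially identical to the paper's: check both triangle identities hold strictly by tracing objects and coherence data through the composites, then invoke Lemmas \ref{lem:unit-equivalence} and \ref{lem:counit-equivalence} to conclude the unit and counit become equivalences on the restricted subcategories. Your worry about ``complete'' is unfounded: the paper uses ``indefinite complete'' synonymously with ``indefinite'' (Definition \ref{defn:indefinite}), and its own proof only verifies indefiniteness of $\Herm(\calC)$.
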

\begin{proof}
    To establish the $2$-adjunction $\mrT \dashv \Herm$ with unit $U$ and counit $K$ we need to check the triangle identities.
    The first identity concerns for each $(\calC, d, \eta) \in \ICat$ the composite functor
    \[
        \Herm(\calC) \xrightarrow{U_{\Herm(\calC)}} 
        \Herm(\mrT(\Herm(\calC))) \xrightarrow{\Herm(K_\calC, \varphi_\calC)}
        \Herm(\calC).
    \]
    The first functor sends $(x, h)$ to $((x,h), \id)$
    and the second functor sends this to $(x, (\varphi_\calC)_{(x,h)} \circ K_\calC(\id)) = (x, h \circ \id) = (x,h)$.
    By construction the composite functor also the identity on morphisms.
    
    The second identity concerns for each $(\calD,\dag) \in \dCat$ the composite functor
    \[
        \mrT(\calD) \xrightarrow{\mrT(U_\calC)} 
        \mrT(\Herm(\mrT(\calD))) \xrightarrow{K_{\mrT(\calD)}, \varphi_{\mrT(\calD)}}
        \mrT(\calD).
    \]
    The first functor sends $x$ to $(x, \id_x)$ and the second functor sends this to $x$. 
    On morphisms the composite is also the identity.
    It remains to check that the involutive data of the composite functor is trivial.
    For the first functor this holds by definition.
    For the second functor we have $\varphi_{\mrT(\calD)}(x,h) = h$, but since we are applying this to the object $(x, \id_x)$, it is also trivial.
    
    Finally, we would like to show that this adjunction restricts to a biequivalence between $\dCat^\indef$ and $\ICat^\exfix$.
    The adjunction does restrict because $\Herm(\calC)$ is always indefinite and $\mrT(\calD)$ always has fixed-point structures.
    The restriction is a biequivalence by lemma \ref{lem:unit-equivalence} and lemma \ref{lem:counit-equivalence}, which state that on these subcategories the unit and counit become equivalences.
\end{proof}

\section{Choosing positive Hermitian structures}

The goal of this section is to prove the main theorem, which relates dagger categories with anti-involutive categories.
In the previous section, we aqcuired an understanding of the relationship between anti-involutive categories and indefinite categories
We thus need to discuss how to obtain dagger categories that are not indefinite from categories with anti-involution.
To achieve this we will restrict the collection of `allowed' Hermitian fixed points on the Hermitian completion to a smaller class of `positive' Hermitian fixed points.
This will yield a smaller dagger subcategory for which the underlying category with anti-involution is equivalent.
For example, to get the dagger category $\Hilbfd$ we take the Hermitian completion of $\Vectfd$ and then restrict to the subclass of Hermitian fixed points that are positive definite as Hermitian pairings.

So let $(\mathcal{C}, d, \eta)$ be a category with anti-involution.
For $P$ any subset of the collection of all Hermitian fixed points in $\mathcal{C}$, let $\mathcal{C}_P \subseteq \operatorname{Herm} \mathcal{C}$ denote the full subcategory on all $(c,h) \in P$.
Here $P$ stands for `positive' to remind us of the typical situation in vector spaces in which we wanted to restrict the Hermitian fixed points to the positive definite ones to obtain the dagger category of Hilbert spaces.
The dagger from $\Herm \calC$ restricts to a dagger on $\mathcal{C}_P$.

We are interested in understanding how many dagger categories we can get by this procedure that are not unitarily equivalent.
For this, first note that if $P \subseteq P'$, inclusion $\mathcal{C}_P \to \mathcal{C}_{P'}$ defines a dagger functor, which is fully faithful.
However, even when $P \neq P'$ this inclusion can still be a unitary equivalence.
Namely, we will show that adding \emph{transfers} of Hermitian fixed points to $P$ does not change the unitary equivalence class of $\mathcal{C}_P$:

\begin{defn}
    Given a Hermitian fixed point $h\colon  c \to dc$ and an isomorphism $g\colon c' \to c$, the \emph{transfer} of $h$ by $g$ is the Hermitian fixed point defined on $c'$ by $d(g) \circ h \circ g$.
\end{defn}

Note that this is indeed a Hermitian fixed point because the following diagram commutes
\[
\begin{tikzcd}
    c' \ar[r,"g"] \ar[rrr,bend left,"d(g) \circ h \circ g"] \ar[d,"{\eta_{c'}}"] & c \ar[r,"h"] \ar[d,"{\eta_{c}}"] & dc \ar[r,"dg"] \ar[d,equals] & dc' \ar[d,equals]
    \\
    d^2 c' \ar[r,"d^2 g"] \ar[rrr,bend right,"d(d(g) \circ h \circ g)"] & d^2 c \ar[r,"dh"] & dc \ar[r,"dg"] & dc'
\end{tikzcd}.
\]

\begin{lem}
\label{lem:positivity}
    Two objects $(c,h), (c',h') \in \Herm \mathcal{C}$ are unitarily isomorphic if and only if $h'$ is a transfer of $h$.
\end{lem}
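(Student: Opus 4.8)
The plan is to reduce the statement to unwinding the definitions of the adjoint and of unitarity in $\Herm\mathcal{C}$, after which both directions become essentially one-line computations using only the contravariant functoriality of $d$. Recall that for an isomorphism $u\colon (c,h)\to(c',h')$ the adjoint is $u^\dagger = h^{-1}\circ d(u)\circ h'\colon c'\to c$, and that $u$ is unitary precisely when $u^\dagger = u^{-1}$ (since $u$ is already invertible, one equation $u^\dagger\circ u=\id_c$ then forces the other).

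For the ``only if'' direction I would start with a unitary $u\colon (c,h)\to(c',h')$ and rewrite $u^\dagger\circ u=\id_c$. Substituting the formula for $u^\dagger$ gives $h^{-1}\circ d(u)\circ h'\circ u=\id_c$, and composing with $h$ on the left yields $d(u)\circ h'\circ u = h$. This is exactly the statement that $h$ is the transfer of $h'$ along the isomorphism $u\colon c\to c'$.

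For the ``if'' direction, suppose $h'=d(g)\circ h\circ g$ for some isomorphism $g\colon c'\to c$. I would set $u:=g^{-1}\colon c\to c'$ and compute its adjoint directly: $u^\dagger = h^{-1}\circ d(g^{-1})\circ h' = h^{-1}\circ d(g^{-1})\circ d(g)\circ h\circ g$. Using contravariant functoriality, $d(g^{-1})\circ d(g)=d(g\circ g^{-1})=d(\id_c)=\id_{dc}$, so this collapses to $u^\dagger = h^{-1}\circ h\circ g = g = u^{-1}$. Hence $u$ is unitary, exhibiting $(c,h)$ and $(c',h')$ as unitarily isomorphic.

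The only genuine subtlety — which I would flag explicitly rather than treat as a true obstacle — is the bookkeeping of directions: the first computation naturally produces ``$h$ is a transfer of $h'$'', whereas the statement asks for ``$h'$ is a transfer of $h$''. I would therefore record at the outset that transfer is a symmetric relation, namely that if $h=d(u)\circ h'\circ u$ then $h'=d(u^{-1})\circ h\circ u^{-1}$, which is the same $d(u^{-1})\circ d(u)=\id$ cancellation as above. With this remark in place the two phrasings are interchangeable and both implications go through verbatim, so the real work is nothing more than carefully tracking the variance of $d$.
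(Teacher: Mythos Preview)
Your proof is correct and follows essentially the same route as the paper: both arguments simply unwind the definition of $u^\dagger$ and rearrange the unitarity equation into the transfer identity. The paper is slightly slicker only in that it takes the isomorphism in the direction $(c',h')\to(c,h)$ so the transfer comes out as ``$h'$ is a transfer of $h$'' directly, avoiding your (entirely valid) symmetry remark.
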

\begin{proof}
    An isomorphism $\alpha\colon  (c',h') \to (c,h)$ is unitary if and only if 
    \[
    \alpha^{-1} = \alpha^\dagger \stackrel{\text{defn}}{=} h'^{-1} \circ d\alpha \circ h.
    \]
    This happens if and only if $h'= d \alpha \circ h \circ \alpha$.
\end{proof}

\begin{defn}
   Given a category $\calC$, let $\pi_0(\calC)$ denote the collection of isomorphism classes of objects.
   If $\calC$ is additionally a dagger category, let $\pi_0^U(\calC)$ denote the collection of unitary isomorphism classes of objects.
\end{defn}

We can rephrase the above lemma by saying that $\pi_0^U(\Herm \mathcal{C})$ is the collection of Hermitian fixed points $(c,h)$ modulo transfer.
Note that a dagger functor is unitarily essentially surjective if and only if it is surjective on $\pi_0^U$.
In particular, if $P$ is a collection of Hermitian fixed points and $P'$ is the closure of $P$ under transfers, then $\mathcal{C}_P \to \mathcal{C}_{P'}$ is unitarily essentially surjective and hence an equivalence of dagger categories.
Therefore we can assume without loss of generality that $P$ is closed under transfers.

Now let $P_c \subseteq P$ denote the subset of Hermitian fixed points on the object $c$.
Then we will want to require that $P_c \neq \emptyset$, so that every object has `some positive Hermitian structure'.
This will additionally ensure that $\mathcal{C}_P \to \Herm \mathcal{C}$ is essentially surjective.

This discussion motivates us to make the following definition.

\begin{defn}
\label{defn:positivity}
    Let $(\mathcal{C}, d, \eta)$ be a category with anti-involution.
    A \emph{positivity notion} on $\mathcal{C}$ is a collection of subsets 
    \[
    P = \{P_c \subset \Hom_\calC(c, d(c)): c \in \operatorname{obj} \mathcal{C} \}
    \]
    such that:
    \begin{itemize}
        \item each $P_c$ is non-empty,
        \item each $(h\colon c \to d(c)) \in P_c$ is a Hermitian fixed point,
        \item $P$ is closed under transfer.
    \end{itemize}
\end{defn}

\begin{rem}
    A necessary and sufficient condition for an anti-involutive category to admit a positivity notion is that every object admits some Hermitian fixed point structure.
\end{rem}

\begin{ex}
If $\mathcal{C}$ is a category with anti-involution in which every object admits some Hermitian fixed points, we can take $P$ to consist of all Hermitian fixed points.
This is a positivity notion and $\mathcal{C}_P = \Herm \mathcal{C}$.
\end{ex}

\begin{cor}
    Positivity notions on an anti-involutive category $\calC$ are in bijection with subsets $[P] \subset \pi_0^U(\Herm \mathcal{C})$ such that the composite 
    \[
    [P] \subset \pi_0^U(\Herm \mathcal{C}) \to \pi_0(\calC)
    \]
    is surjective.
\end{cor}
\begin{proof}
    It follows immediately by the lemma above that a subset $[P] \subset \pi_0^U(\Herm \mathcal{C})$ is equivalent to a choice of Hermitian fixed points on some collection of objects of $\mathcal{C}$ that is additionally closed under transfer.
    The condition that the given composite is surjective is equivalent to requiring that for every object $c$ there exists an isomorphic object $c'$ and a Hermitian fixed point $h\colon  c' \to dc'$ such that $(c',h\colon c' \to dc') \in P_{c'}$.
    In case such $(c',h)$ exists, we also get that $P_c \neq \emptyset$ by transferring $h$ to $c$.
    Conversely it is clear that the desired composite is surjective if $P_c \neq \emptyset$ for all $c$.
\end{proof}

\begin{ex}\label{ex:positivity-structure-on-dagger-cat}
    Recall that if $(\calD, \dag)$ is a dagger category, a Hermitian fixed point on the anti-involutive category $\mrT(\calD, \dag)$ is the same as a self-adjoint automorphism $h\colon c \to c^\dag = c$.
    There is a canonical positivity notion on $\mrT(\calD, \dag)$, which is defined by
    \[
        P_c := \{ h\colon c \to c \;|\; \text{ There is an isomorphism } f\colon c \to c' \text{ with } h = f^\dag \circ f \}.
    \]
    This is exactly smallest positivity notion that contains all identities.%
    \footnote{
        In the published version of this paper we incorrectly required $f$ to be an automorphism rather than an isomorphism, which does not define a positivity notion as it is not always closed under transfers. 
    }
\end{ex}

\begin{rem}\label{rem:positive}
    An endomorphism $h\colon c \to c$ in a dagger category (or $C^*$-category) is called \emph{positive} if there is a morphism $f\colon c \to c'$ with $h= f^\dag \circ f$.
    The set $P_c \subset \hom_\calC(c,c)$ from example \ref{ex:positivity-structure-on-dagger-cat} is contained in the set of positive automorphisms.
    However, it is not true in general that every positive automorphism is in $P_c$:
    it might happen that some automorphism $h\colon c \to c$ can be written $h = f^\dag \circ f$ for some morphism $f\colon c \to c'$, but that $f$ cannot be chosen to be invertible.
    (A concrete counterexample can be constructed as a specific subexample of Example \ref{ex:hilb}.)
\end{rem}

\begin{ex}
\label{ex:hilb}
We study the case of finite-dimensional vector spaces with $d = \overline{(.)}^*$ as before.
Note that $\pi_0^U(\Herm(\Vectfd_\C)) = \N \times \N$ given by the signature of the corresponding Hermitian pairing. 
Here the signature of $(V,(.,.))$ is the pair $(p,q)$ so that there exists an orthonormal basis $\{e_1, \dots, e_{p+q}\}$ of $V$ with
\[
(e_i, e_i) = 1 \quad (e_j,e_j) = -1
\]
for $i \leq p$ and $j > p$.
The forgetful map $\pi_0^U(\Herm (\Vectfd_\C)) \to \pi_0(\Vectfd_\C) = \mathbb{N}$ is addition.

We provide some examples of positivity notions on this category with anti-involution.
One example is to take $P_V$ to be the collection of positive definite Hermitian inner products on $V$.
This is a positivity notion because every finite-dimensional vector space admits a positive definite inner product.
The resulting dagger category $\mathcal{C}_P$ is the dagger category of finite-dimensional Hilbert spaces.
Another example is to take $P_V$ to consist of all Hermitian inner products in which case $\mathcal{C}_P$ is the dagger category of finite-dimensional vector spaces with arbitrary Hermitian inner products.

There are also many more unusual positivity notions on $\Vectfd_\C$.
Namely, for every dimension $d$ we can separately specify a nonempty collection of signatures $(p,q) \in \N \times \N$ such that $p+q =d$ we allow for Hermitian forms.
Any such choice gives a dagger category $\mathcal{C}_P$ and two different choices are not dagger equivalent compatibly with the map to $\Herm (\Vectfd_\C)$.
Note that it would be reasonable to restrict the allowed positivity notions further by requiring compatibility with tensor products or direct sums, but we will not pursue this further here.
\end{ex}

\begin{obs}\label{obs:positive-functor}
    Given an involutive functor 
    $(F, \varphi)\colon \calC \to \calD$
    and positivity notions $P$ on $\calC$ and $Q$ on $\calD$ the following are equivalent:
    \begin{enumerate}
        \item For all $(h\colon c \to d(c)) \in P_c$, we have 
        \[
        \Herm F (c,h) = (\varphi_c \circ F(h)\colon F(c) \to d(F(c))) \in Q_{F(c)}.
        \]
        \item The map $\pi_0^U( \Herm \calC) \to \pi_0^U(\Herm \calD)$ induced by $F$ sends $[P]$ to a subset of $[Q]$.
    \end{enumerate}
\end{obs}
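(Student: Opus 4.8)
The plan is to reduce both implications to a single structural fact: because $Q$ is closed under transfer, a Hermitian fixed point $q'\colon c'' \to d(c'')$ lies in $Q_{c''}$ if and only if its unitary isomorphism class lies in $[Q]\subset \pi_0^U(\Herm \calD)$, and likewise for $P$. Indeed, if $[(c'',q')]\in[Q]$ then $(c'',q')$ is unitarily isomorphic to some $(c,q)\in Q$; applying Lemma \ref{lem:positivity} (with $(c,q)$ as the first object) shows that $q'$ is a transfer of $q$, so closedness of $Q$ under transfer gives $q'\in Q_{c''}$. The converse is immediate from the definition of $[Q]$. I would record this saturation property first, since it is precisely what lets us pass back and forth between membership in $Q$ and membership of the corresponding unitary class in $[Q]$.

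Next I would observe that the map appearing in (2) is genuinely the one induced by $\Herm F$. We have already shown that $\Herm F\colon \Herm \calC \to \Herm \calD$ is a dagger functor, and any dagger functor sends a unitary $u$ (i.e.\ one with $u^\dagger = u^{-1}$) to a unitary, because $\Herm F(u)^\dagger = \Herm F(u^\dagger) = \Herm F(u)^{-1}$. Hence $\Herm F$ descends to a well-defined map $\pi_0^U(\Herm \calC) \to \pi_0^U(\Herm \calD)$, acting on classes by $[(c,h)] \mapsto [\Herm F(c,h)] = [(F(c),\varphi_c\circ F(h))]$.

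With these two remarks both directions are short. For $(1)\Rightarrow(2)$, every class in $[P]$ is represented by some $(c,h)$ with $h\in P_c$; condition (1) says $\Herm F(c,h)\in Q_{F(c)}$, so its class lies in $[Q]$, and thus the induced map sends $[P]$ into $[Q]$. For $(2)\Rightarrow(1)$, take $h\in P_c$; then $[(c,h)]\in[P]$, so by (2) its image $[(F(c),\varphi_c\circ F(h))]$ lies in $[Q]$, and the saturation property for $Q$ upgrades this to $\varphi_c\circ F(h)\in Q_{F(c)}$, which is exactly statement (1).

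I do not expect a serious obstacle; the entire content sits in the two preliminary remarks. If anything is delicate it is the direction $(2)\Rightarrow(1)$, where it is not enough to know that the image class lies in $[Q]$: one must convert this back into honest membership in $Q$. This is the step that genuinely uses closedness of $Q$ under transfer via Lemma \ref{lem:positivity}, and it is the reason the very definition of a notion of positivity builds in transfer-closure.
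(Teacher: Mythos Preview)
Your proposal is correct and supplies exactly the argument the paper leaves implicit: the observation is stated in the paper without proof, relying on Lemma~\ref{lem:positivity} and the transfer-closure of $Q$ in precisely the way you spell out. Your two preliminary remarks (saturation of $Q$ under unitary equivalence, and well-definedness of the induced map on $\pi_0^U$ via $\Herm F$ being a dagger functor) are the right ingredients, and both implications then follow immediately as you describe.
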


\begin{defn}
    The $2$-category $\PCat$ has as objects anti-involutive categories equipped with a positivity notion.
    Morphisms are involutive functors that intertwine the positivity notions in the sense of equivalent conditions in observation \ref{obs:positive-functor}.
    The $2$-morphisms are the same as in $\ICat$.
\end{defn}

\begin{rem}
    Note that the forgetful functor $\PCat \to \ICat$ is well-behaved:
    Positivity notions can be transported along equivalences of categories and they can be restricted along fully faithful functors.
    Therefore the forgetful functor has lifts for equivalences. 
    Moreover, if we restrict to fully faithful functors as morphisms in both $2$-categories, then the functor
    $\PCat^{\rm ff} \to \ICat^{\rm ff}$ 
    is equivalent to the Grothendieck construction of the functor 
    $(\ICat^{\rm ff})^{op} \to \mathrm{PoSet}$
    that sends an anti-involutive category to its poset of possible positivity notions.
    
    We can also easily characterise the equivalences in the $2$-category $\PCat$.
    An involutive functor $(F, \varphi)\colon  (\calC, P) \to (\calD, Q)$ is an equivalence in $\PCat$, if and only if $F$ is an equivalence of categories (and hence $(F, \varphi)$ is an equivalence in $\ICat$ by lemma \ref{lem:involutive-equivalence}),
    and moreover the induced map of sets $[P] \to [Q]$ is surjective. 
    (This map is automatically injective since $F$ is fully faithful.)
    The latter condition says that every positive Hermitian fixed point in $(\calD, Q)$ is (up to transfer) of the form $\varphi_c \circ F(h)$ for $h\colon c \to d(c)$ a positive Hermitian fixed point in $(\calC, P)$. 
\end{rem}

\begin{thm}
    \label{thm:biequivalence}
    Equipping $\mrT(\calD, \dag)$ with the positivity notion from example \ref{ex:positivity-structure-on-dagger-cat}
    defines a lift $\mrT_\pos$:
    \[
    \begin{tikzcd}
        & \PCat \ar[d, "{\rm forget}"] \\
        \dCat \ar[ru, "{\mrT_\pos}"] \ar[r, "\mrT"] & \ICat.
    \end{tikzcd}
    \]
    This $2$-functor $\mrT_{\pos}$ is a biequivalence.
\end{thm}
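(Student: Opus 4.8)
The plan is to exhibit the \emph{positive Hermitian completion} as a pseudo-inverse to $\mrT_p$, and to verify that the unit and counit of Theorem~\ref{thm:2-adjunction}, suitably refined, become objectwise equivalences once the positivity data is present. First I would upgrade the assignment $(\calC,d,\eta,P)\mapsto \calC_P$ to a strict $2$-functor $\Herm_P\colon \PCat\to\dCat$ by restricting $\Herm$: on a positivity-preserving involutive functor $(F,\varphi)$ the dagger functor $\Herm F$ restricts to $\calC_P\to\calD_Q$ precisely because condition~(1) of Observation~\ref{obs:positive-functor} guarantees $\Herm F(c,h)=(Fc,\varphi_c\circ F(h))\in Q$ whenever $(c,h)\in P$; on $2$-cells one restricts $\Herm\alpha$, and strict functoriality is inherited from that of $\Herm$. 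The goal is then to show $\mrT_p$ and $\Herm_P$ are biinverse.

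For the unit I would corestrict $U_\calD\colon \calD\to\Herm(\mrT\calD)$ to a dagger functor $U_\calD'\colon \calD\to(\mrT\calD)_P=\Herm_P(\mrT_p\calD)$, which lands in the positive part since $\id_x=\id_x^\dag\circ\id_x$ is positive. The key point, and the reason this improves on Lemma~\ref{lem:unit-equivalence}, is that $U_\calD'$ is a dagger equivalence for \emph{every} $\calD$, not only the indefinite ones. Indeed it is fully faithful, so by Lemma~\ref{lem:dagger-equivalence} only unitary essential surjectivity remains: given $(c,h)\in(\mrT\calD)_P$ we may write $h=a^\dag\circ a$ with $a$ an \emph{automorphism} (Example~\ref{ex:positivity-structure-on-dagger-cat}), so $h$ is the transfer of $\id_c$ along the isomorphism $a$, whence $(c,h)$ is unitarily isomorphic to $(c,\id_c)=U_\calD'(c)$ by Lemma~\ref{lem:positivity}. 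This is exactly where the invertibility of $a$ built into Definition~\ref{defn:positivity} (cf.\ Remark~\ref{rem:positive}) is used.

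For the counit I would restrict $K_\calC$ to $K_\calC'\colon \mrT_p(\calC_P)=(\mrT(\calC_P),P')\to(\calC,P)$. Since admitting a positivity notion forces every object of $\calC$ to admit a Hermitian fixed point, $\calC^\exfix=\calC$, and Lemma~\ref{lem:counit-equivalence} shows the underlying functor is an equivalence in $\ICat$. It remains to match positivity. A positive self-adjoint automorphism $\tau=a^*\circ a$ of $(c,h)\in\calC_P$ is sent by $\Herm K_\calC'$ to $h\circ\tau=d(a)\circ h\circ a$, the transfer of $h$ along $a$, which lies in $P_c$ by closure under transfer; conversely every $h'\in P_c$ is hit, as $((c,h'),\id)$ is a positive object of $\mrT_p(\calC_P)$ mapping to $(c,h')$. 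Hence the induced map $[P']\to[P]$ on $\pi_0^U(\Herm(-))$ is a bijection, and by the criterion for equivalences in $\PCat$ (an involutive equivalence inducing a surjection $[P']\twoheadrightarrow[P]$) the functor $K_\calC'$ is a $\PCat$-equivalence; this gives biessential surjectivity of $\mrT_p$.

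The unit and counit being objectwise equivalences, together with the pseudonaturality inherited from $U$ and $K$, exhibit $\mrT_p$ and $\Herm_P$ as biinverse $2$-functors, which is the asserted biequivalence; that $\mrT_p$ lifts $\mrT$, and hence is compatible with the forgetful functors to $\Cat$, is immediate from its definition. As a byproduct the resulting local equivalence $\Hom_{\dCat}(\calD,\calE)\simeq\Hom_{\PCat}(\mrT_p\calD,\mrT_p\calE)$ reproduces, via Lemma~\ref{functorsfix} and the identification of isometric with involutive $2$-cells, the computation of dagger-functor categories advertised in the introduction. I expect the main obstacle to be the positivity-matching in the counit: verifying that the \emph{canonical} positivity $P'$ on the dagger category $\calC_P$ corresponds under $K_\calC'$ to the given $P$, which is precisely the identity $h\circ\tau=d(a)\circ h\circ a$ together with surjectivity of $[P']\to[P]$. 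This is the step where all three clauses of Definition~\ref{defn:positivity}—non-emptiness, the fixed-point condition, and closure under transfer—are simultaneously used, and where the design of the definition pays off.
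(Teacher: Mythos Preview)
Your proposal is correct and follows essentially the same route as the paper: both construct the positive Hermitian completion $\Herm_P$ as a pseudo-inverse, restrict the unit $U$ and counit $K$ from Theorem~\ref{thm:2-adjunction}, verify $U_P$ is a dagger equivalence using that a positive $h=a^\dag a$ with $a$ invertible is a transfer of $\id$, and check the counit preserves and surjects onto $P$ via the identity $\varphi_{(c,h)}\circ K_\calC(\tau)=h\circ\tau=d(a)\circ h\circ a$ (taking $a=\id$ for surjectivity). Your write-up is if anything slightly more explicit than the paper's, e.g.\ in noting that the existence of $P$ forces $\calC^{\exfix}=\calC$ and in invoking the $\PCat$-equivalence criterion for the counit.
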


\begin{proof}
    We define an inverse $2$-functor
    \[
        \Herm_{\pos}\colon \PCat \to \dCat
    \]
    by declaring $\Herm_{\pos}(\calC, d, \eta, P) \subset \Herm(\calC, d, \eta)$ to be the full sub-$\dag$-category on those Hermitian fixed points $(h\colon c \to d(c))$ where $h \in P_c$.
    In other words, $\Herm_{\pos}$ is the subcategory of Hermitian fixed points which are positive.
    This is a well-defined $2$-functor because $\Herm$ is and because $1$-morphisms in $\PCat$ preserve the positivity notions by definition.
    
    The functor $U\colon \calD \to \Herm(\mrT(\calD))$ from definition \ref{defn:U} that sends $x$ to $(x, \id_x)$,
    restricts to a functor $U_{\pos}\colon \calD \to \Herm_{\pos}(\mrT_{\pos}(\calD))$ since $(x, \id_x)$ is always a positive Hermitian fixed point in $\mrT_{\pos}(\calD)$.
    Therefore this defines a natural transformation
    $U \colon \id_{\dCat} \to \Herm_{\pos} \circ \mrT_{\pos}$
    and as observed below definition \ref{defn:U} the dagger functor
    \[
        U_{\pos}\colon \calD \to \Herm_{\pos}(\mrT_{\pos}(\calD))
    \]
    is always fully faithful (and essentially surjective).
    We would like to show that it is surjective up to isometry.
    Let $(h\colon x \to x)$ be some object in $\Herm_{\pos}(\mrT_{\pos}(\calD))$.
    Since $h$ is a positive Hermitian fixed point in $\mrT_{\pos}(\calD)$, we can write it as $h = f^\dag \circ f$ for some isomorphism $f \colon x \to y$.
    In other words, $h$ is the transfer of $(\id\colon y \to y)$ along $f$.
    By lemma \ref{lem:positivity} this means that there is a unitary isomorphism $(y,\id_y) \cong (x, h)$, showing that $U_\pos$ is surjective up to unitary isomorphism.
    Therefore $U_{\pos}$ is a dagger equivalence and hence an equivalence in the $2$-category $\dCat$.
    
    Finally, consider the involutive functor
    \[
        (K_\mathcal{C}, \varphi_\calC) \colon \mrT(\Herm(\calC, d, \eta)) \too (\calC, d, \eta)
    \]
    that we constructed naturally for all $(\calC, d, \eta) \in \ICat$ in definition \ref{defn:THerm->id}.
    Given a positivity notion $P$ on $\calC$, we also get a positivity notion on $\mrT_{\pos}(\Herm_{\pos}(\calC, d, \eta, P))$.
    We would like to show that $K_\calC$ preserves positivity notions.
    A positive Hermitian fixed point in $\mrT_{\pos}(\Herm_{\pos}(\calC, d, \eta, P))$ is of the form $p = f^\dag \circ f$ for some isomorphism $f\colon (x, h) \to (y,h')$.
    Here $h\colon x \to d(x)$ and $h'\colon y \to d(y)$ are Hermitian fixed points in $\calC$ that are positive in the sense that they are contained in $P$.
    Using the definition of the dagger in $\Herm(\calC)$ we can write this as $p = (h^{-1} \circ d(f) \circ h') \circ f$.
    In order to show that $(K_\mathcal{C}, \varphi_\calC)$ respects the positivity notions, we use condition (1) of observation \ref{obs:positive-functor}, which says that $\varphi_{(x,h)} \circ K_\mathcal{C}(p)$ must be positive.
    Using $\varphi_{(x,h)}= h \colon x \to d(x)$ we see that
    \[
        \varphi_{(x,h)} \circ K_\mathcal{C}(p) = h \circ (h^{-1} \circ d(f) \circ h' \circ f) = d(f) \circ h' \circ f.
    \]
    is the transfer of $h'$ along $f$, and since $h'$ was positive, so is this.
    Therefore $(K_\mathcal{C}, \varphi_\calC)$ defines a natural morphism
    \[
        \mrT_{\pos}(\Herm_{\pos}(\calC, d, \eta, P)) \too (\calC, d, \eta, P)
    \]
    of anti-involutive categories with positivity notions.
    We already observed in lemma \ref{lem:counit-equivalence} that $(K_\mathcal{C}, \varphi_\calC)$ is an equivalence in $\ICat$.
    For it to also be an equivalence in $\PCat$ we need to check that every positive fixed point in $(\calC, d, \eta, P)$ is hit (up to transfer) by $(K_\mathcal{C}, \varphi_\calC)$.
    We saw above that every morphism $d(f) \circ h' \circ f$ can be written as $\varphi_{(x,h)} \circ K_\mathcal{C}(p)$.
    Setting $f=\id_x$ we see that indeed every positive fixed point $h'$ can be hit by this.
\end{proof}

\begin{cor}\label{cor:dagger-functors-as-fixedpoints}
        Let $(\calC, \dag)$ and $(\calD, \dag)$ be two dagger categories.
    Then $F \mapsto \dag_\calD \circ F \circ \dag_\calC$ defines an anti-involution on the category of all functors $\calC \to \calD$.
    The inclusion of the dagger functors into the fixed points 
    \[
        \Fun^\dag((\calC, \dag), (\calD, \dag)) \hookrightarrow 
        \left(\Fun(\calC, \calD)\right)^{\rm fix}
    \]
    is fully faithful and its essential image consists of those functors that preserve the positivity notions.
\end{cor}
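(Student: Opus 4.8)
The plan is to recognize this corollary as a direct consequence of the machinery already assembled, specifically Lemma \ref{functorsfix} combined with the biequivalence $\mrT_p$. First I would observe that the anti-involution $F \mapsto \dag_\calD \circ F \circ \dag_\calC$ on $\Fun(\calC, \calD)$ is precisely the anti-involution $dF = d_\calD \circ F \circ d_\calC$ from Lemma \ref{functorsfix}, where $d_\calC = \dag_\calC$ and $d_\calD = \dag_\calD$ are the (strict) anti-involutions underlying the dagger categories via Example \ref{ex:dagger-is-trivial-involutive}. By that lemma, the fixed-point category $(\Fun(\calC, \calD))^{\fix}$ is exactly $\Hom_{\ICat}(\mrT\calC, \mrT\calD)$, the category of involutive functors and involutive natural transformations. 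So the inclusion in question is really the comparison between dagger functors and involutive functors between the underlying anti-involutive categories.

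**Identifying the essential image.** The core of the proof is to characterize which involutive functors $(F, \varphi)\colon \mrT\calC \to \mrT\calD$ arise from genuine dagger functors. I would transport the positivity notions from Example \ref{ex:positivity-structure-on-dagger-cat}: equip $\mrT\calC$ with $P$ and $\mrT\calD$ with $Q$ as in $\mrT_p$. An involutive functor $(F,\varphi)$ underlies a dagger functor precisely when its associated coherence datum is trivial in the appropriate sense; concretely, the dagger functors correspond to those $(F, \varphi)$ for which $\Herm F$ sends positive Hermitian fixed points to positive ones, i.e.\ to the $1$-morphisms of $\PCat$. The key computation is that a dagger functor $F\colon \calC \to \calD$ gives $\varphi_c = \id$, and then $\Herm F(c, h) = (F(c), F(h))$; using that $F$ preserves the dagger one checks $F(a^\dag \circ a) = F(a)^\dag \circ F(a)$, so positive fixed points (those of the form $a^\dag \circ a$) map to positive fixed points. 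This shows dagger functors land in the positivity-preserving involutive functors via Observation \ref{obs:positive-functor}.

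**Fully faithfulness and the converse.** Fully faithfulness of the inclusion is the statement that an involutive natural transformation between (images of) dagger functors is the same as an isometric natural transformation, which is exactly the content of the Remark following Example \ref{ex:dagger-is-trivial-involutive} together with the final paragraph of Lemma \ref{functorsfix} ($\alpha$ is involutive iff it is an isometry in $\Herm \Fun(\calC, \calD)$). For the converse containment—that every positivity-preserving involutive functor is (isomorphic to) one coming from a dagger functor—I would invoke the biequivalence $\mrT_p \colon \dCat \simeq \PCat$ directly. Under this biequivalence, $\Hom_{\PCat}(\mrT_p\calC, \mrT_p\calD) \simeq \Hom_{\dCat}(\calC, \calD) = \Fun^\dag(\calC, \calD)$, and the forgetful functor to $\ICat$ realizes this as the full subcategory of positivity-preserving involutive functors inside $\Hom_{\ICat}(\mrT\calC, \mrT\calD) = (\Fun(\calC,\calD))^{\fix}$.

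**Main obstacle.** The step I expect to require the most care is checking that the positivity-preservation condition of Observation \ref{obs:positive-functor} matches exactly the condition for an involutive functor to underlie a \emph{strict} dagger functor, rather than merely a dagger functor up to unitary isomorphism. Because $\Fun^\dag$ consists of strict dagger functors (with $F(f^\dag) = F(f)^\dag$ on the nose) while $\PCat$-morphisms only see functors up to the coherence $\varphi$, I would need to verify that given a positivity-preserving $(F, \varphi)$ one can rectify $\varphi$ to the identity after choosing representatives, using that in $\mrT\calD$ the Hermitian fixed point on each object can be taken to be $\id$ and that $\varphi_c$ becomes a unitary self-adjoint comparison; the positivity condition is precisely what guarantees this rectification is compatible with the dagger. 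Verifying this rectification is the genuine content beyond formal application of the preceding results.
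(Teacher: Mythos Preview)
Your approach is essentially the same as the paper's: identify $(\Fun(\calC,\calD))^{\fix}$ with $\Hom_{\ICat}(\mrT\calC,\mrT\calD)$ via Lemma~\ref{functorsfix} and then invoke the biequivalence $\mrT_p$ to conclude. Your ``main obstacle'' is a non-issue, since the corollary only asks for the \emph{essential} image, not the strict image; the biequivalence already yields an equivalence of hom-categories $\Fun^\dag(\calC,\calD)\simeq\Hom_{\PCat}(\mrT_p\calC,\mrT_p\calD)$, and the latter is by definition the full subcategory of positivity-preserving involutive functors, so no rectification of $\varphi$ is required.
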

\begin{proof}
    We have seen in \ref{functorsfix} that given two anti-involutive categories $(\calC, d_\calC), (\calD, d_\calD)$ there is an anti-involution $d$ on $\Fun(\calC, \calD)$ given by the expression $F \mapsto d_\calD \circ F \circ d_\calC$.
    Its fixed points are the category of which objects are involutive functors and morphisms are involutive natural transformations.
    Specializing to the case where the anti-involutive categories come from dagger categories, we see that
    \[
    \left(\Fun(\calC, \calD)\right)^{\rm fix} \cong \Hom_{\ICat}(\mrT\calC, \mrT \calD).
    \]
    The corollary now follows directly from the main theorem.
\end{proof}

\section{Applications to unitary topological field theory}
\label{sec:outlook}


In this section, we will outline how this work applies to the question of how to define unitary topological quantum field theory.
Recall that Atiyah \cite{atiyahtft} introduced the notion of a topological quantum field theory.
A topological quantum field theory (TQFT) is defined as a symmetric monoidal functor $(\Bord_{d,d-1}, \sqcup) \to (\Vect_\C, \otimes)$ from the oriented bordism category to vector spaces.
With the purpose of defining unitary TQFT, he also introduced a Hermitian axiom.
This required Hilbert space pairings in such a way that simultaneously reversing in- and output and orientation-reversing bordisms amounts to taking adjoints of operators.
A more precise formulation is: \cite{baezdagger, turaev}

\begin{defn}
A \emph{$d$-dimensional unitary TQFT} is a symmetric monoidal dagger functor
\[
(\Bord_{d,d-1}, \sqcup) \to (\Hilb, \otimes).
\]
\end{defn}

In \cite[Definition 4.14]{freedhopkins}, the authors define a reflection structure on a TQFT to be $\Z/2$-equivariance data for certain $\Z/2$-actions $\overline{(.)}$ on the domain and target category.
They define reflection positive TQFTs as reflection TQFTs preserving a certain positivity notion.
Our approach in this paper is strongly motivated by Freed-Hopkins.
It is shown in \cite[Section 2.2]{luukthesis} that reflection structures on a TQFT are equivalent to anti-involutive structures for the anti-involution $\overline{(.)}^*$.
The following corollary thus makes precise the relationship between reflection positive and unitary TFT.

In \cite[Appendix A]{luukspinstatistics}, it is shown that our main theorem \ref{thm:biequivalence} generalises to (symmetric/braided) monoidal categories. In particular, we have the following analogue of Corollary \ref{cor:dagger-functors-as-fixedpoints}:
\begin{cor}\label{cor:sm-fix}
    Let $(\calC, \dag, \otimes)$ and $(\calD, \dag, \otimes)$ be two symmetric monoidal dagger categories.
    Then $F \mapsto \dag_\calD \circ F \circ \dag_\calC$ defines an anti-involution on the category of symmetric monoidal functors $\calC \to \calD$.
    The inclusion of the symmetric monoidal dagger functors into the fixed points 
    \[
        \Fun^{\otimes,\dag}((\calC, \dag, \otimes), (\calD, \dag, \otimes)) \hookrightarrow 
        \left(\Fun^\otimes(\calC, \calD)\right)^{\rm fix}
    \]
    is fully faithful and its essential image consists of those functors that preserve the positivity notions.
\end{cor}
\begin{proof}
    Following the proof of Lemma \ref{functorsfix}, we can construct an anti-involution on the category $\Fun^\otimes((\calC, \otimes), (\calD, \otimes))$ of symmetric monoidal functors by setting
    \[
        dF := \dagger_\calD \circ F \circ \dagger_\calC,
    \]
    which in the case at hand will square to $d^2 F = F$ because $\calC$ and $\calD$ are dagger categories.
    Therefore we may set $\eta = \id_F$.
    As in Lemma \ref{functorsfix}, the groupoid of Hermitian fixed points of $(d, \eta)$ is the groupoid of anti-involutive symmetric monoidal functors as defined in \cite[Definition 2.2.12]{luukthesis}.
    Now \cite[Theorem A.8]{luukspinstatistics} shows that the groupoid of symmetric monoidal dagger functors is equivalent to the groupoid of those anti-involutive symmetric monoidal functors that preserve the positivity notions.
\end{proof}

As an example of how Corollary \ref{cor:sm-fix} can be applied to TQFTs, we will consider the case of $2$-dimensional unitary TQFTs.
First, let us recall the folk-theorem that classifies them as commutative Frobenius algebras.

\begin{defn}
    We define the groupoid of commutative Frobenius algebras over $\C$, $\mathrm{CFrob}_\C$ to have objects
    \[
        (A, \mu\colon A \otimes A \to A, \nu\colon 1 \to A, \Delta \colon A \to A \otimes A, \varepsilon\colon A \to 1)
    \]
    where $(A, \mu, \nu)$ is a commutative algebra, $(A, \Delta, \varepsilon)$ is a cocommutative coalgebra, and they satisfy the Frobenius axiom
    \[
        (\mu \otimes \id_A) \circ (\id_A \otimes \Delta) 
        = \Delta \circ \mu
        = (\id_A \otimes \mu) \circ (\Delta \otimes \id_A).
    \]
    Morphisms in this groupoid are maps $A \to B$ that are both algebra and coalgebra homomorphisms.
    (These are necessarily invertible.)
\end{defn}

\begin{thm}[\cite{abramstqft}, also see \cite{kock2dtqft}]\label{thm:folk-2d-tqft}
    There is an equivalence of groupoids
    \[
        \Fun^\otimes(\Bord_{1,2}, \Vect_\C) \simeq \mathrm{CFrob}_\C
    \]
    defined by sending $\calZ \colon \Bord_{1,2} \to \Vect_\C$ to $\calZ(S^1)$, equipped with the Frobenius structure given by
    \[
        (\calZ(S^1), \mu = \calZ(\pants), \nu = \calZ(\disk), \Delta = \calZ(\copants), \varepsilon = \calZ(\codisk)).
    \] 
\end{thm}

    To study $2$-dimensional unitary TQFTs we equip $\Bord_{1,2}$ with the symmetric monoidal anti-involution $\dagger$ that reverses bordisms and
    $\Vect_\C$ with the symmetric monoidal anti-involution $\overline{(-)}^*$.
    Recall that this anti-involution $\Vect_\C$ corresponds to the indefinite dagger category $\Herm$.
    From Corollary \ref{cor:sm-fix}, we get an induced anti-involution on $\Fun^\otimes(\Bord_{1,2}, \Vect_\C)$ defined by
    \[
        \calZ \longmapsto \overline{(-)^*} \circ \calZ \circ \dagger.
    \]
    The equivalence in Theorem \ref{thm:folk-2d-tqft} is equivariant with respect to this involution, if we equip $\mathrm{CFrob}_\C$ with the involution:
    \[
        d\colon (A, \mu, \nu, \Delta, \varepsilon) \longmapsto 
        (\overline{A}^*, \overline{\Delta}^*, \overline{\varepsilon}^*, \overline{\mu}^*, \overline{\nu}^*),
    \]
    with $\eta\colon \id \cong d^2$ given by the same natural isomorphism as in $(\Vect_\C, \overline{(-)}^*)$.
    Here we implicitly used that $\overline{(-)^*}$ is symmetric monoidal to get isomorphisms $\overline{A \otimes A}^* \cong \overline{A}^* \otimes \overline{A}^*$.
    Passing to fixed points we get an equivalence
    \[
        \Fun^{\otimes,\dagger}(\Bord_{1,2}, \Herm) \simeq \mathrm{CFrob}_\C^{\rm fix}.
    \]
    A fixed point on the right is a commutative Frobenius algebra $(A, \mu, \nu, \Delta, \varepsilon)$ with an isomorphism $\alpha\colon A \cong \overline{A}^*$ of Frobenius algebras that ``squares'' to the identity.
    We can encode the isomorphism in terms of a non-degenerated sesquilinear pairing $\langle.,. \rangle \colon \overline{A} \otimes A \to \C$  such that
    \[
    \langle a,b \rangle = \overline{\langle b,a \rangle}.
    \]
    That $\alpha$ is an isomorphism of Frobenius algebras then means that with respect to the pairing $\langle .,. \rangle$, $\mu$ is adjoint to $\Delta$ and $\nu$ is adjoint to $\varepsilon$.
    Because of this it will suffice to only encode the algebra structure and the pairing.
    We make the following definition, see e.g.~\cite[Definition 3.3]{vicary-Frobenius}:

\begin{defn}
    A \emph{Hermitian commutative Frobenius algebra} is a tuple $(A, \mu, \nu, \langle .,. \rangle)$ of a commutative algebra $(A, \mu, \nu)$ and a Hermitian pairing $\langle .,. \rangle$ on $A$ such that 
    \[
        (\id \otimes \mu) \circ (\id \otimes \mu^\dagger) = \mu^\dag \circ \mu = (\id \otimes \mu) \circ (\mu^\dagger \otimes \id)
    \]
    where $(-)^\dagger$ denotes the adjoint operator with respect to the pairing.
    A Hermitian commutative Frobenius algebra is called \emph{unitary} if $\langle .,. \rangle$ is positive definite.
    A morphism of Hermitian commutative Frobenius algebras is an isometry that is also an algebra homomorphism.
\end{defn}

In summary, we obtain the following:

\begin{cor}
    There is an equivalence between $2$d unitary TFTs and unitary commutative Frobenius algebras.
\end{cor}
\begin{proof}
    If $(A, \mu, \nu, \langle .,. \rangle)$ is a Hermitian commutative Frobenius algebra, then $(A, \mu, \nu, \mu^\dagger, \nu^\dagger)$ is a commutative Frobenius algebra. Indeed, $(A,\mu^\dagger,\nu^\dagger)$ is cocommutative coalgebra because $(A, \mu, \nu)$ is a commutative algebra. 
    Clearly $(A, \mu, \nu, \mu^\dagger, \nu^\dagger)$ is a fixed-point of the anti-involution on $\mathrm{CFrob}_\C$ defined above.
    Conversely all fixed points are of this form and we in fact have an equivalence of groupoids.
    Combining the classification of ordinary $2$-dimensional TQFTs \ref{thm:folk-2d-tqft} with Corollary \ref{cor:sm-fix}, we see that there is an equivalence between anti-involutive TQFTs and Hermitian commutative Frobenius algebras.
    Therefore the corollary follows from Corollary \ref{cor:sm-fix} after realizing that the TQFT preserves positivity notions if and only if $A$ is a Hilbert space.
\end{proof}

\begin{rem}
    We illustrated how our theorem allows us to take the computation of non-unitary TFTs as a black box and from it compute the groupoid of unitary TFTs.
    This computation has been done by hand in the literature, see \cite{durhuus1994classification,sawin1995direct, zhu2dhermitiantqft}.
\end{rem}

\begin{rem}
    In \cite{freedhopkins} Freed--Hopkins define and then classify invertible fully extended unitary (or in their setting reflection positive) TQFTs, but how to define general fully extended unitary TQFTs remained open.
    In \cite{higherdagger}, based on the current article, a proposal for a definition of a dagger $n$-category is given, together with the construction of a bordism dagger $n$-category.
    This leads to a definition of a fully local unitary TQFT with values in a target dagger $n$-category.
\end{rem}

\bibliographystyle{unsrt}
\bibliography{literature}

\end{document}